%
%
%


\documentclass[11pt]{article}


\usepackage{graphicx,amsmath,amssymb,amsthm,pdfsync}
\usepackage[top=4cm, bottom=3cm, left=3cm, right=3cm]{geometry}



\newtheorem{theorem}{Theorem}[section]
\newtheorem{lemma}[theorem]{Lemma}

\newtheorem{definition}[theorem]{Definition}

\newtheorem{remark}[theorem]{Remark}

\numberwithin{equation}{section}

\newcommand{\RR}{\mathbb{R}}
\newcommand{\resc}{{\bf resc}}
\newcommand{\AC}{\mathrm{ac}}
\newcommand{\mF}{{\mathcal F}}

\newcommand{\pv}{{\rm p.v.\, }}
\newcommand{\sign}{{\rm sign }}
\def\derpar#1#2{\frac{\partial#1}{\partial#2}}

\begin{document}

\title{Refined Asymptotics for the subcritical Keller-Segel system and Related Functional Inequalities}


\author{Vincent Calvez\protect
\footnote{
\'Ecole Normale Sup\'erieure de
Lyon, UMR CNRS 5669 "Unit\'e de Math\'ematiques Pures et
Appliqu\'ees", 46 all\'ee d'Italie, F-69364 Lyon Cedex 07,
France. \texttt{vincent.calvez@ens-lyon.fr}}, Jos\'e Antonio Carrillo\protect
\footnote{Instituci\'o Catalana de Recerca i Estudis Avan\c cats and Departament de
Matem\`atiques, Universitat Aut\`onoma de Barcelona, E-08193
Bellaterra, Spain. \texttt{carrillo@mat.uab.es}}}



\maketitle

\begin{abstract}
We analyze the rate of convergence towards self-similarity for the
subcritical Keller-Segel system in the radially symmetric
two-dimensional case and in the corresponding one-dimensional case
for logarithmic interaction. We measure convergence in Wasserstein
distance. The rate of convergence towards self-similarity does not
degenerate as we approach the critical case. As a byproduct, we
obtain a proof of the logarithmic Hardy-Littlewood-Sobolev
inequality in the one dimensional and radially symmetric two
dimensional case based on optimal transport arguments. In addition
we prove that the one-dimensional equation is a contraction with
respect to Fourier distance in the subcritical case.
\end{abstract}

\section{Introduction}
We will concentrate on seeking decay rates towards equilibria or
self-similarity profiles for aggregation equations with linear
diffusion in the fair competition regime. These models describe
the evolution of a population of individuals which are diffusing
by standard Brownian motion and attracting each other by a
pairwise symmetric potential $W(x)$. We focus on a logarithmic interaction potential
$W(x)=2\chi\log|x|$, with $\chi>0$. The Fokker-Planck equation
governing the evolution of the probability density function
$\rho(t,x)$ associated to this particle system reads as
\begin{equation}\label{eq:KS}
\derpar{\rho}{t} = \nabla\cdot \left [ \dfrac1N \nabla\rho + 2\chi\rho
\left(\nabla \log|x|\ast \rho \right) \right ]\,,\quad t>0\, , \quad x\in \RR^N\, .
\end{equation}
Due to translational invariance and mass conservation, in the rest
of this work we restrict to zero center of mass probability
densities,
\[
\rho(t,x) \geq 0\,,\quad \int_{\RR^N} \rho(t,x)\,dx = 1 \,
,\quad  \int_{\RR^N} x \rho(t,x)\,dx = 0 \, ,
\]
By fair competition, we mean that the dynamics of \eqref{eq:KS}
are driven by a simple dichotomy as in the classical Keller-Segel
system in two dimensions \cite{Keller-Segel-70,patlak,DP,BDP}, the
modified Keller-Segel system in one dimension \cite{CPS,BCC} or
the Keller-Segel model with suitable nonlinear diffusion in larger
dimensions \cite{BCL}. In all these examples there is a critical
parameter which makes the distinction between global existence of
solutions and finite-time blow-up. More precisely, we will discuss
the modified one-dimensional Keller-Segel equation \cite{CPS,BCC}:
\begin{equation}
\partial_t \rho   =  \partial^2_{xx}\rho  +  2\chi \partial_x  \left( \rho \partial_x \left( \log |x|  * \rho \right) \right)\, , \quad t>0\, , \quad x\in \RR\, ,
\label{eq:KS1D}
\end{equation}
and the radially symmetric two-dimensional classical Keller-Segel
equation:
\begin{equation} \label{eq:radialKS}
\partial_t(r\rho(t,r)) =  \dfrac12 \partial_r(r\partial_r \rho(t,r)) +  2\chi\partial_r\left[ \rho(t,r) M[\rho(t)](r)  \right]\, , \quad t>0\, , \quad r\in \RR_+\,,
\end{equation}
where $M[\rho]$ denotes the cumulated mass of
$\rho$ inside balls,
\begin{equation*}
M[\rho](r) = 2\pi \int_{0}^{r}  \rho(s) s\,ds\, .
\end{equation*}
Both equations \eqref{eq:KS1D} and \eqref{eq:radialKS} exhibit a
transition depending on the sensitivity coefficient $\chi$:
\begin{itemize}
\item {\bf Subcritical Case.-} For any $0<\chi<1$
solutions exist globally-in-time and they approach a unique
self-similar solution as $t\to \infty$, see \cite{DP,BDP,Biler1,CPS}.

\item {\bf Critical Case.-} For $\chi=1$ solutions exist
globally-in-time. There are infinitely many stationary solutions
with infinite second moment. Solutions having finite initial
second moment concentrate in infinite time towards the Dirac mass
$\delta_0$ \cite{BCM,Biler1,Souplet}. Solutions of infinite initial second moment
close enough to a stationary solution converge to it as $t\to
\infty$ \cite{BCC-Critical}.

\item {\bf Supercritical Case.-} For any $\chi>1$ smooth
fast-decaying solutions do not exist globally in time \cite{Nagai,Biler2,BDP,CPS}.
\end{itemize}
The critical parameter $\chi = 1$ can be obtained from two formal
computations at this stage. The evolution of the second moment
satisfies in both cases the relation:
\begin{equation*}\label{eq:secondmoment}
\frac12 \frac{d}{dt} \int_\RR |x|^2 \rho(t,x)\,dx = 1-\chi\, .
\end{equation*}
This implies that for $\chi>1$ solutions will necessarily blow-up
before the second moment touches zero. On the other hand, the
Keller-Segel equation  \eqref{eq:KS} is equipped with a free
energy (entropy minus potential energy),
\begin{equation}\label{eq:functional}
\mathcal F[\rho] = \dfrac 1N  \int_{\RR^N}   \rho(x)\log \rho(x)
\, dx + \chi  \iint_{\RR^N\times\RR^N} \rho(x)  \log(|x-y|)
\rho(y)\, dxdy\, .
\end{equation}
It is formally decreasing along the trajectories
\begin{equation} \label{eq:decreasing energy}
\dfrac{d}{dt}\mathcal{F}[\rho(t)] = -\int_{\RR^N} \rho(t,x)\left|
\nabla \left( \dfrac 1{N}\log \rho(t,x)  + 2\chi \log|x|*\rho(t,x)
\right) \right|^2\,dx\,  .
\end{equation}
Moreover, it was shown in \cite{DP,BDP} that for $\chi<1$ the free
energy estimate from above implies an {\em a priori} bound in the entropy part of
the functional which is at the basis of the construction of
global-in-time solutions. This was achieved by using the
Logarithmic-HLS inequality \cite{Be,CarLos} which relates the
entropy and the interaction part of the functional.

Nontrivial equilibrium profiles or critical profiles, only exist
for the critical parameter $\chi = 1$. They are solutions to the
following Euler-Lagrange equations:
\begin{align}
\mu'(x) + 2 \mu(x)\partial_x\left(\log|x|*\mu(x)\right) = 0\, ,
\label{eq:equality1D} \\
\dfrac12 r \mu'(r)  +  2  \mu(r) M[\mu](r) = 0
\label{eq:equality2D}  \, ,
\end{align}
resp. in dimension $N = 1$ and in dimension $N = 2$ with radially
symmetry. In fact, we have an explicit formulation of the
stationary states,
\begin{equation}\label{eq:equi}
\mu(x) = \dfrac1{\pi(1 + |x|^2)^N}\,, \quad N = 1,2\, .
\end{equation}
This coincides with the equality cases in the Logarithmic-HLS
inequality.

In the subcritical case $\chi <1$, solutions are known to converge
to unique self-similar profiles \cite{BDP}. For studying
convergence towards self-similarity, it is generally useful to
rescale the space and time variables in the subcritical regime
$\chi<1$. The Keller-Segel system rewrites as
\begin{equation} \label{eq:KSres}
\dfrac{\partial \rho}{\partial t}  =  \dfrac 1N \Delta\rho + 2\chi
\nabla\cdot \left[ \rho \left(\nabla \log |x| * \rho \right)
\right] + \nabla\cdot \left[ x \rho \right]\, , \quad t>0\, ,
\quad x\in \RR^N\, ,
\end{equation}
and the free energy is complemented with a quadratic confinement
potential:
\begin{equation}\label{freerescaled}
\mathcal F_{\resc}[\rho] = \mathcal F[\rho] + \dfrac12
\int_{\RR^N} |x|^2 \rho(x)\, dx\, .
\end{equation}
Due to the change of variables, self-similar solutions correspond
to equilibrium solutions of \eqref{eq:KSres}. The rate of
convergence towards equilibrium for \eqref{eq:KSres} in the
subcritical case was recently studied in
\cite{DolbeaultEscobedoetal} where the same rate as for the heat
equation was obtained for small mass.

Let us finally mention that both \eqref{eq:KS} and
\eqref{eq:KSres} are gradient flows of the free energy functionals
\eqref{eq:functional} and \eqref{freerescaled} respectively, when
the space of probability measures is endowed with the euclidean
Wasserstein metric $W_2$. We refer to the seminal papers \cite{JKO,Otto} and to \cite{AmbrosioGigliSavare02p} for a
general theory. For instance, we can write \eqref{eq:KS} in short
as
\begin{equation}
\dot\rho(t) = - \nabla_{W_2} \mathcal F[\rho(t)]\, .
\label{eq:gradient flow}
\end{equation}
This assertion was made rigorous in \cite{BCC}, where the
variational minimizing movement scheme
\cite{AmbrosioGigliSavare02p} was shown to converge for
\eqref{eq:KS}. This fact allows us to consider a way of measuring
the distance towards equilibrium or self-similarity intimately
related to the evolution due to \eqref{eq:gradient flow}. In fact,
we will show that optimal transport tools are key techniques to
describe this behavior at least in the one dimensional case
\eqref{eq:KS1D} and in the radial case in two dimensions
\eqref{eq:radialKS}.

In order to investigate further the bounds of the free energy
functional leading to the dichotomy discussed above and the
characterization of the critical profiles, the Logarithmic-HLS
inequality proved in \cite{Be,CarLos} is essential.  In Section 2,
we show an alternative proof based on optimal transport tools in
the one dimensional case and in the radial case in two dimensions.
There is another recent proof of this inequality with sharp
constants in the two dimensional case by fast diffusion flows
\cite{CCL}. The Logarithmic-HLS inequality can be restated with
our notations as:

\begin{theorem}[Logarithmic HLS inequality] \label{thm:logHLS}
Assume $N = 1,2$. The functional $\mathcal F $ is bounded from
below. The extremal functions are uniquely given by
\eqref{eq:equi} up to dilations in the set of probability
densities with zero center of mass.
\end{theorem}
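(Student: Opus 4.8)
The plan is to prove this inequality using optimal transport, following the mass-transportation proof strategy for functional inequalities (in the spirit of the Barthe/Cordero-Erausquin–McCann–Schmuckenschläger approach). The key observation is that the stationary profile $\mu(x) = \frac{1}{\pi(1+|x|^2)^N}$ given in \eqref{eq:equi} is the natural candidate for the minimizer, since it satisfies the Euler–Lagrange equations \eqref{eq:equality1D}–\eqref{eq:equality2D} at the critical value $\chi=1$. I would therefore aim to show $\mathcal F[\rho] \geq \mathcal F[\mu]$ for every admissible probability density $\rho$ with zero center of mass, with equality exactly at dilations of $\mu$.

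First I would set up the Brenier optimal transport map $T$ pushing $\rho$ forward onto the reference profile $\mu$ (or the reverse direction, whichever makes the entropy term convenient). In one dimension this map is simply the monotone rearrangement determined by equality of the cumulative distribution functions, and in the radial two-dimensional case it is the radial monotone map matching the cumulated masses $M[\rho]$ and $M[\mu]$; this is where working with radial symmetry and one dimension is essential, because the transport map has an explicit and tractable form. Next I would write the entropy part $\frac1N\int \rho\log\rho$ using the change-of-variables / Monge–Ampère (Jacobian) relation and use convexity of the relevant function (the map $s\mapsto s\log s$, or rather its behavior under the logarithm of the Jacobian) to bound it below, and I would treat the logarithmic interaction term $\chi\iint \rho(x)\log|x-y|\rho(y)\,dx\,dy$ by transporting both copies simultaneously and exploiting concavity/convexity of $\log|T(x)-T(y)|$ along the interpolating map. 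The two contributions should combine, after integration by parts using that $\mu$ solves the Euler–Lagrange equation, into $\mathcal F[\mu]$ plus a manifestly nonnegative displacement-convexity remainder.

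I expect the main obstacle to be controlling the interaction (logarithmic potential) term. Unlike the entropy, which is displacement convex by the McCann theory, the logarithmic kernel is only conditionally so: one must carefully use the zero-center-of-mass constraint and the precise monotonicity of the transport map to show that the quadratic form $\iint \rho(x)\log|x-y|\rho(y)$ decreases (relative to $\mu$) under the transport interpolation. Establishing the correct sign of this term — and simultaneously extracting the equality conditions forcing $T$ to be an affine dilation — is the delicate point, and I would handle it by expanding $\log|T(x)-T(y)| - \log|x-y|$ along the geodesic and bounding the second-order term via the displacement-convexity of the logarithmic interaction energy restricted to the admissible class. Once both terms are bounded, matching the lower bound with $\mathcal F[\mu]$ and tracking when each inequality is saturated yields both the lower bound and the characterization of extremals as dilations of $\mu$.
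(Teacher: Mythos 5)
Your general framework matches the paper's in outline: optimal transport with the explicit monotone map in one dimension (radial in two dimensions), the Monge--Amp\`ere change of variables for the entropy, and the Euler--Lagrange equation satisfied by $\mu$. But the mechanism you propose for the decisive step --- controlling the sign of the interaction term --- would fail. You plan to bound the second-order remainder ``via the displacement-convexity of the logarithmic interaction energy restricted to the admissible class,'' and no such convexity holds: the kernel $\log|x-y|$ is concave, not convex, as a function of the difference, and the paper states explicitly that $\mathcal F$ is \emph{not} displacement convex in the sense of McCann. The zero center of mass constraint does not rescue this; it only removes translational invariance and plays no role in the sign of the interaction term. The whole point of the paper's argument is to circumvent convexity of the functional entirely.

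The actual mechanism has two ingredients absent from your outline. First, the direction of transport matters and cannot be left ``whichever makes the entropy term convenient'': the paper takes $\psi$ with $\psi'\,\#\,\mu = \rho$, i.e.\ it transports \emph{from the critical profile $\mu$ onto $\rho$}, so that $\mathcal F[\rho]-\mathcal F[\mu]$ becomes $-\int \log\left(\psi''_{\mathrm{ac}}(a)\right)\mu(a)\,da + \iint \log\left(\int_0^1 \psi''([a,b]_t)\,dt\right)\mu(a)\mu(b)\,da\,db$, via the mean-value identity $\frac{\psi'(a)-\psi'(b)}{a-b} = \int_0^1 \psi''([a,b]_t)\,dt$. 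Since the interaction enters with a plus sign, Jensen's inequality for the \emph{concave} function $\log$, together with $\psi''\ge \psi''_{\mathrm{ac}}$ (Lemma \ref{lem:interaction}), bounds this term below --- concavity works in the favorable direction precisely because of this choice of transport direction. Second, the Euler--Lagrange equation is used not through integration by parts but through its fixed-point reformulation \eqref{eq:charac. log}, $\mu(p) = \iint_0^1 \mu(p-tq)\mu(p-tq+q)\,dt\,dq$: after the change of variables $(a,b,t)\mapsto(p,q,t)$ with $p=[a,b]_t$, $q=b-a$, the Jensen lower bound becomes exactly $\int \log\left(\psi''_{\mathrm{ac}}(p)\right)\mu(p)\,dp$ and cancels the entropy term identically, leaving no remainder to estimate. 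The equality case of Jensen then forces $\psi''$ to be constant, i.e.\ $\rho$ is a dilation of $\mu$. Without the fixed-point identity and the correct transport direction, your outline has no mechanism to produce the sign.
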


In short, we demonstrate that any critical point of the free
energy is in fact a global minimizer. This is a property which
holds true for convex functionals, although the functional
$\mathcal F$ is not displacement convex in the sense of McCann \cite{McC2}.

The ideas behind the proof of the sharp Logarithmic-HLS
inequality allow us to tackle the rate of convergence in $W_2$ by
similar methods for the rescaled version \eqref{eq:KSres} in one
dimension and for radial densities in the two dimensional case
provided $\chi<1$. We prove in Section 3 the following result.

\begin{theorem}[Long-time asymptotics]\label{thm:dynamics}
Assume that $N = 1,2$ being the initial data $\rho_0$ radially
symmetric if $N=2$. In the subcritical case $\chi<1$, solutions of
\eqref{eq:KSres}
in the rescaled
variables converge exponentially fast towards the unique equilibrium
configuration $\nu$. More precisely, the following estimate holds
true
\begin{equation*}
\dfrac d{dt} W_2(\rho(t),\nu)^2 \leq -2W_2(\rho(t),\nu)^2 \, .
\end{equation*}
\end{theorem}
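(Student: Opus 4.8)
The plan is to reduce everything to one spatial dimension via monotone rearrangement. I would represent $\rho(t)$ by the pseudo-inverse $X(t,\cdot):[0,1]\to\RR$ of its cumulative distribution, so that $W_2(\rho(t),\nu)^2=\int_0^1|X(t,\eta)-\bar X(\eta)|^2\,d\eta$, with $\bar X$ the pseudo-inverse of the equilibrium $\nu$; in the radial two-dimensional case the same role is played by the inverse $R(m)$ of the normalized cumulative mass, and $W_2^2=\int_0^1|R-\bar R|^2\,dm$. Under this change of variables the rescaled equation \eqref{eq:KSres} becomes the $L^2(0,1)$ gradient flow $\partial_t X=-\,\mathrm{grad}\,\mathcal F_{\resc}[X]$ of
\[
\mathcal F_{\resc}[X]=-\int_0^1\log X_\eta\,d\eta+\chi\iint_{[0,1]^2}\log|X(\eta)-X(\zeta)|\,d\eta\,d\zeta+\tfrac12\int_0^1 X^2\,d\eta,
\]
that is, $\partial_t X=\dfrac{X_{\eta\eta}}{X_\eta^2}-2\chi\displaystyle\int_0^1\dfrac{d\zeta}{X(\eta)-X(\zeta)}-X$, while $\bar X$ is the critical point $\mathrm{grad}\,\mathcal F_{\resc}[\bar X]=0$. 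The regularity and decay of the profiles (from the gradient-flow solution theory) is what justifies these manipulations and the vanishing of boundary terms at $\eta=0,1$.

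I would then differentiate the squared distance. Writing $\phi=X-\bar X$ and using $\mathrm{grad}\,\mathcal F_{\resc}[\bar X]=0$,
\[
\tfrac12\tfrac{d}{dt}W_2^2=\int_0^1 \partial_t X\,\phi\,d\eta=-\int_0^1\big(\mathrm{grad}\,\mathcal F_{\resc}[X]-\mathrm{grad}\,\mathcal F_{\resc}[\bar X]\big)\,\phi\,d\eta.
\]
The confining term is exactly quadratic and contributes precisely $-\int_0^1\phi^2\,d\eta=-W_2^2$; for the entropic and interaction terms I integrate the Hessian along the segment $X_s=\bar X+s\phi$. Setting $F(t)=t+t^{-1}-2=\frac{(t-1)^2}{t}\ge0$, the $s$-integration collapses these two contributions and yields
\[
\tfrac12\tfrac{d}{dt}W_2^2=-\big(\mathrm{Diff}-\chi\,\mathrm{Inter}\big)-W_2^2,\qquad \mathrm{Diff}=\int_0^1 F\!\Big(\tfrac{X_\eta}{\bar X_\eta}\Big)d\eta,\quad \mathrm{Inter}=\iint_{[0,1]^2} F\!\Big(\tfrac{X(\eta)-X(\zeta)}{\bar X(\eta)-\bar X(\zeta)}\Big)d\eta\,d\zeta,
\]
both nonnegative since $X,\bar X$ are increasing. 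Thus the theorem is equivalent to the single dissipation inequality $\mathrm{Diff}\ge\chi\,\mathrm{Inter}$, the factor $-2$ in the statement coming entirely from the quadratic confinement.

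To prove $\mathrm{Diff}\ge\chi\,\mathrm{Inter}$ I would use that $F$ is convex on $(0,\infty)$ together with the identity
\[
\frac{X(\eta)-X(\zeta)}{\bar X(\eta)-\bar X(\zeta)}=\frac{\int_\zeta^\eta X_\xi\,d\xi}{\int_\zeta^\eta \bar X_\xi\,d\xi},
\]
which exhibits the difference quotient as the mean of $r(\xi)=X_\xi/\bar X_\xi$ against the probability weight $\bar X_\xi\,d\xi$ on $[\zeta,\eta]$. Jensen's inequality gives $F(\cdot)\le b^{-1}\int_\zeta^\eta F(r)\,\bar X_\xi\,d\xi$ with $b=\bar X(\eta)-\bar X(\zeta)$, and exchanging the order of integration,
\[
\mathrm{Inter}\le 2\int_0^1 F(r(\xi))\,\bar X_\xi\,H(\xi)\,d\xi,\qquad H(\xi)=\int_0^\xi\!\!\int_\xi^1\frac{d\eta\,d\zeta}{\bar X(\eta)-\bar X(\zeta)}.
\]
Hence $\mathrm{Diff}\ge\chi\,\mathrm{Inter}$ follows once the \emph{pointwise} bound $2\chi\,\bar X_\xi\,H(\xi)\le1$ is established — a statement on the equilibrium alone, and exactly the ingredient behind the optimal-transport proof of Theorem~\ref{thm:logHLS}.

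This last estimate is the main obstacle, and it is where $\chi<1$ is decisive; note that $\bar X_\xi H(\xi)\le\tfrac12$ fails for generic increasing $\bar X$, so the precise shape of the profile must be used. For the critical profile $\bar X(\eta)=-\cot(\pi\eta)$ the addition formula $\bar X(\eta)-\bar X(\zeta)=\sin\pi(\eta-\zeta)/(\sin\pi\eta\sin\pi\zeta)$ makes $H$ explicitly computable and gives the exact identity $\bar X_\xi H(\xi)\equiv\tfrac12$, so the bound reads $\chi\le1$ and is sharp. For the subcritical profiles one must control the same kernel quantity uniformly; since these equilibria approach the critical one as $\chi\uparrow1$, where the bound is an equality, the constant stays bounded away from zero and the rate does not degenerate. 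The radial two-dimensional case is handled identically, replacing the uniform variable $\eta$ by the cumulative-mass variable and $F$ by its weighted analogue, with the Cauchy-type profile $\mu(r)=1/\pi(1+r^2)^2$ playing the role of the critical comparison state.
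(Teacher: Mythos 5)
Your overall strategy is essentially the paper's own, reorganized: the inverse-distribution-function parametrization is exactly the one-dimensional optimal transport map, your exact identity
\[
\tfrac12\tfrac{d}{dt}W_2^2=-\bigl(\mathrm{Diff}-\chi\,\mathrm{Inter}\bigr)-W_2^2
\]
is correct and corresponds to the cancellation the paper achieves through Lemma \ref{lem:jensen quadratic}, and your Jensen--Fubini step (convexity of $F(t)=t+t^{-1}-2$ plus exchanging the order of integration) validly reduces the whole theorem to the pointwise bound $2\chi\,\bar X_\xi H(\xi)\le 1$ on the equilibrium alone. Up to that point the argument is sound, modulo the regularity caveats you acknowledge.

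The genuine gap is that this pointwise bound --- the crux of the proof --- is never established for the subcritical equilibrium $\nu$, which is the profile the theorem is about. You verify $2\bar X_\xi H\equiv 1$ only for the critical Cauchy profile $\bar X(\eta)=-\cot(\pi\eta)$; but $\nu$ is not explicit (it is neither this profile nor any dilation of it), so the addition-formula computation does not apply, and the continuity argument you substitute (``the equilibria approach the critical one as $\chi\uparrow 1$, where the bound is an equality, so the constant stays bounded away from zero'') cannot work: convergence toward a case of \emph{equality} is precisely the situation in which an inequality may fail on either side, and it says nothing for a fixed $\chi<1$ in any case. The correct and short argument uses the Euler--Lagrange equation of $\nu$ itself, i.e.\ the paper's characterization \eqref{eq:charac. log quadratic}; written in Eulerian variables it reads
\[
\nu(x)=2\iint_{z<x<y}\left(\frac{\chi}{y-z}+\frac{y-z}{2}\right)\nu(y)\nu(z)\,dy\,dz\,,
\]
and dropping the nonnegative second term in the bracket gives exactly $2\chi\iint_{z<x<y}\frac{\nu(y)\nu(z)}{y-z}\,dy\,dz\le\nu(x)$, which is your bound after the change of variables $y=\bar X(\eta)$, $z=\bar X(\zeta)$, $x=\bar X(\xi)$. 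This also explains, better than a perturbation argument, why the rate is uniform in $\chi$: what you need is an identity minus a nonnegative confinement term, not a limit of the critical case. Finally, the two-dimensional radial case is not ``handled identically'': by Newton's theorem the interaction becomes local in the cumulative-mass variable, the diffusion term involves the Monge--Amp\`ere determinant, and the paper needs an additional arithmetic--geometric mean step together with Lemma \ref{lem:jensen quadratic} for $\gamma=1/2$ before invoking the radial characterization \eqref{eq:StStcharacterization radial resc}; your one-sentence sketch glosses over all of this.
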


Surprisingly enough, the rate of convergence that we obtain does
not depend on the parameter $\chi$. Our estimate is uniform as
long as $\chi$ remains subcritical and is equal to the rate of
convergence towards self-similarity for the heat equation. This is
due to the fact that entropy and interaction contributions cancel
each other, and only the confinement contribution remains yielding
a uniform estimate. Although convergence is likely to be uniform,
notice that the asymptotic profile becomes more and more singular
as $\chi\to 1^-$, as shown by the simple second moment identity
\begin{equation}\label{eq:2nd moment nu}
\int_\RR |a|^2\nu(a)\, da = 1 - \chi\, .
\end{equation}

Finally, we devote Section 4 to propose an alternative method of
measuring the distance towards self-similarity in the one
dimensional case. We make a connection between the one dimensional
modified Keller-Segel model \eqref{eq:KS1D} and certain
Boltzmann-like equations used in granular gases and
wealth-distribution models, see \cite{CT,DMT} and the references
therein. This connection is due to the fact that \eqref{eq:KS1D}
can be written in Fourier variables like the referred Boltzmann
equations. Following the ideas of \cite{CT} we prove that equation
\eqref{eq:KS1D} is indeed a contraction for the so-called Fourier
distances defined in Section 4.

\begin{theorem}\label{thm:contraction Fourier}
Assume $\chi< 1$ and the initial data have finite second moments. The one-dimensional Keller-Segel system
\eqref{eq:KS1D} is a contraction for the distance  $d_1$.
It is a uniformly strict contraction in the rescaled frame, with a contraction factor which does not depend on~$\chi$.
\end{theorem}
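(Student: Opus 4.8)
The plan is to follow the strategy of Carrillo--Toscani for granular gas equations, exploiting that the one-dimensional system \eqref{eq:KS1D} becomes algebraically tractable after a Fourier transform. First I would recall the definition of the Fourier distance
\[
d_s(\rho_1,\rho_2) = \sup_{\xi\neq 0} \frac{|\widehat{\rho_1}(\xi) - \widehat{\rho_2}(\xi)|}{|\xi|^s}\,,
\]
so that the theorem concerns $s=1$. The central observation is that the logarithmic kernel has a particularly simple Fourier symbol: since $\partial_x \log|x| = \pv\, 1/x$ and the convolution structure turns products into convolutions, the nonlinear term should Fourier-transform into a convolution of $\widehat\rho$ against the symbol of $\sign$ (up to constants, $\widehat{\pv\,1/x} = -i\pi\sign(\xi)$). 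I would therefore write \eqref{eq:KS1D} on the Fourier side and obtain an evolution equation for $\widehat\rho(t,\xi)$ in which the quadratic nonlinearity appears as an integral over the sign of frequencies.

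The key computation is then to track the difference $\widehat\rho_1 - \widehat\rho_2$ of two solutions with the same mass, center of mass, and (after the diffusion and confinement terms are accounted for) second moment. I would divide this difference by $|\xi|$ and estimate the time derivative of the resulting quantity. The diffusion term $-|\xi|^2\widehat\rho$ is linear and contributes a nonpositive, or in the rescaled frame strictly negative, term after the normalization by $|\xi|$; the real work is the nonlinear interaction term. The standard granular-gas trick is that the bilinear operator, once written in Fourier, admits a bound of the form $|\xi|$ times a product of factors each controlled by $d_1$ and by the conserved mass, because the sign symbol is bounded by one and the two zero-moment conditions ensure the integrand gains the needed power of $|\xi|$. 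Carrying this out, one should land at a differential inequality $\tfrac{d}{dt} d_1(\rho_1,\rho_2) \leq 0$ in the original variables, yielding the contraction.

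In the rescaled frame the extra confinement term $\nabla\cdot(x\rho)$ transforms into a transport term $-\xi\,\partial_\xi\widehat\rho$ on the Fourier side, which, after dividing by $|\xi|$, produces a genuinely negative contribution $-\,d_1$ independent of $\chi$; combined with the nonpositive nonlinear contribution this gives the uniformly strict contraction $\tfrac{d}{dt} d_1 \leq -\,d_1$, with a rate that does not see $\chi$. This mirrors exactly the phenomenon already observed for $W_2$ in Theorem \ref{thm:dynamics}, where entropy and interaction cancel and only the confinement survives, so I would expect the same cancellation to be the structural reason behind the $\chi$-independence here.

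I expect the main obstacle to be the rigorous bound on the nonlinear Fourier term: one must verify that the singular symbol $\sign(\xi)$ does not spoil the estimate and that the gain of a factor $|\xi|$ is available uniformly. This requires using the subcriticality $\chi<1$ together with the a priori control on the second moments (hence on $\widehat\rho$ near $\xi=0$) to justify that the relevant integrals converge and that the division by $|\xi|$ is harmless. A secondary technical point is ensuring that the two compared solutions share enough conserved quantities for the low-frequency behavior of $\widehat\rho_1-\widehat\rho_2$ to vanish to the order needed for finiteness of $d_1$; this is where the finite-second-moment hypothesis enters.
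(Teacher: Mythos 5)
Your overall strategy (a Carrillo--Toscani Fourier-distance contraction, following \cite{CT}) is the right one, but there is a genuine gap at precisely the step you flag as ``the main obstacle,'' and it is not a removable technicality: it is where the paper's key idea lives. You propose to Fourier-transform the nonlinearity through the symbol of $\pv 1/x$, i.e.\ $-i\pi\sign(\xi)$, which turns $\partial_x\bigl(\rho\,\partial_x(\log|x|*\rho)\bigr)$ into a \emph{full convolution over $\RR$} of $\hat\rho$ against $\sign(\cdot)\,\hat\rho(\cdot)$. This object is problematic: both factors are merely bounded continuous functions (the characteristic function of a probability density need not decay integrably), so the convolution is not absolutely convergent, and the estimate you invoke (``the sign symbol is bounded by one'') does not produce a finite bound, let alone one controlled by $d_1$ and the conserved moments. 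The paper avoids this entirely by a different algebraic step: doubling the variables and using the exact identity $\frac{e^{i\xi x}-e^{i\xi y}}{x-y} = i\xi\int_0^1 e^{i\xi[x,y]_\sigma}\,d\sigma$, which converts the nonlinearity into the Wild-type, frequency-localized form $\chi|\xi|^2\int_0^1\hat\rho(t,\sigma\xi)\hat\rho(t,(1-\sigma)\xi)\,d\sigma$ of \eqref{eq:1DKS Fourier}. Because the integration runs over the compact segment of frequencies between $0$ and $\xi$, with each factor bounded by $1$, the difference of the quadratic terms for two solutions, divided by $|\xi|$, is bounded by $F(t)\int_0^1\bigl(\sigma+(1-\sigma)\bigr)\,d\sigma = F(t)$, where $F(t)=d_1(\rho_1(t),\rho_2(t))$. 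Without this identity (the paper's Lemma 4.1), your plan cannot be carried out as written.

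A second, related misstatement concerns signs and the role of $\chi<1$. The nonlinear contribution is \emph{not} nonpositive: it enters as $+\chi|\xi|^2 F(t)$, and contraction holds only because, at the lowest optimal frequency $\xi^*(t)$ where the supremum in \eqref{def:d1} is attained, it is dominated by the diffusion term $-|\xi^*|^2F(t)$, giving $(\chi-1)|\xi^*|^2F(t)\le 0$. This balance is where subcriticality enters --- not, as you suggest, in securing convergence of integrals --- and it explains why in the original variables one only gets $\frac{d}{dt}F\le 0$ with no explicit rate (since $|\xi^*(t)|$ cannot be bounded away from zero). In the rescaled frame your picture is essentially correct: the confinement term becomes $-\xi\partial_\xi\hat\rho$, and its commutator with the weight $|\xi|^{-1}$ produces the extra damping $-F(t)$, the pure transport part being handled along characteristics and contributing nothing at the supremum; this yields the $\chi$-independent rate $1$, in analogy with Theorem \ref{thm:dynamics}, exactly as you anticipated.
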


\section{An alternative proof of the logarithmic HLS inequality}

\subsection{Preliminaries on Optimal Transport Tools}
Let $\mu$ and $\rho$ be two density probabilities. According to
\cite{B,McC1} there exists a convex function $\psi$ whose gradient
pushes forward the measure $\mu(a)\, da$ onto $\rho(x)\, dx$:
$\nabla \psi\# \left(\mu(a)\, da\right) = \rho(x)\, dx$. This
convex function satisfies the Monge-Ampre equation in the weak
sense,
\begin{equation*}
\mu(a) = \rho(\nabla\psi(a))\det D^2\psi(a)\, .
\label{eq:MongeAmpere}
\end{equation*}
Regularity of the transport map is a big issue in general. Here we
will use the fact that the Hessian measure $\det{}_H D^2\psi(a)$
can be decomposed in an absolute continuous part $\det{}_A
D^2\psi(a)$ and a positive singular measure \cite[Chapter 4]{Villani}. In
particular we have $\det{}_H D^2\psi(a) \geq \det{}_A D^2\psi(a)$.
The formula for the change of variables will be important when
dealing with the entropy contribution. For any measurable function $U$, bounded below such that $U(0) = 0$ we have
\cite{McC2,Villani}
\begin{equation}\label{eq:change variables}
\int_{\RR^N} U(\rho(x)) \, dx = \int_{\RR^N}
U\left(\dfrac{\mu(a)}{\det{}_A D^2\psi(a)}\right)\det{}_A D^2\psi(a)\, da\, .
\end{equation}

In fact this paper will only be concerned with the one-dimensional
case, and the two-dimensional radial case. The complexity of
Brenier's transport problem dramatically reduces in both cases. In
dimension one, the transport map $ \phi'$ is explicitely given by: $ \psi'(a) = X\circ A^{-1}(a)$ where $X$ and $A$ denote respectively the pseudo-inverse
cumulative distribution function of the densities $\rho$ and
$\mu$. The singular part of the positive measure $\psi''$
corresponds to having holes in the support of the density $\rho$.

In the two-dimensional radial case, the Brenier's map can be
expressed as the one-dimensional transport between the densities
$2\pi\mu(a)a^{d-1}\, da$ and $2\pi\rho (r)r^{d-1}\, dr$. The
determinant of the Hessian is given by
\begin{equation*}
\det{}_H D^2 \psi(a) = \dfrac 1 {2a} \dfrac d{da} \left( \psi'
\right)^2(a)\, ,
\end{equation*}
where the derivative of $(\psi')^2$ has to be understood in the distributional sense.

The following Lemma will be used to estimate the interaction
contribution in the free energy, and in the evolution of the
Wasserstein distance. For notational convenience we denote the
convex combination of $a$ and $b$ by $[a,b]_t = (1-t)a + t b$.

\begin{lemma}\label{lem:interaction}
Let $K:(0,\infty)\to \RR$ be an increasing and concave function
such that $\lim_{z\to 0} F(z) = -\infty$. Then
\begin{equation} \label{eq:jensen 1D}
K\left( \int_{0}^1 \psi''([a,b]_t)\, dt  \right) \geq  \int_{0}^1K\left( \psi_{\AC}''([a,b]_t)  \right)\, dt\, .
\end{equation}
Equality is achieved in \eqref{eq:jensen 1D} if and only if the distributional derivative of the
transport map $\psi''$ is a constant function.

Analogously in the two-dimensional radially symmetric case we deduce
\begin{equation} \label{eq:jensen radial}
K\left( \int_{0}^1 \det{}_H D^2\psi([a,b]_t)\, dt  \right)
\geq  \int_{0}^1K\left( \det{}_A D^2\psi([a,b]_t)  \right)\,
dt\, .
\end{equation}
Equality is achieved in \eqref{eq:jensen radial} if and only if $\psi'$ is a multiple of the identity.
\end{lemma}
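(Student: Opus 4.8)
The plan is to reduce both inequalities to two elementary properties of $K$: its monotonicity and its concavity. The only structural input I need is that in each case the full Hessian measure dominates its absolutely continuous part, namely $\psi'' \geq \psi_{\AC}''$ as nonnegative measures in the one-dimensional case, and $\det{}_H D^2\psi \geq \det{}_A D^2\psi$ in the radial case, the latter being recorded in the preliminaries.

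First I would make sense of the left-hand side. In one dimension the affine change of variables $s = [a,b]_t$, so that $dt = ds/(b-a)$, identifies $\int_0^1 \psi''([a,b]_t)\, dt$ with the total mass $\frac{1}{b-a}\,\psi''([a,b]) = \frac{1}{b-a}(\psi'(b) - \psi'(a))$, i.e.\ the jump of the monotone transport map $\psi'$ across the segment. Since the singular part of $\psi''$ is nonnegative, this dominates $\frac{1}{b-a}\int_a^b \psi_{\AC}''(s)\, ds = \int_0^1 \psi_{\AC}''([a,b]_t)\, dt$, and because $K$ is increasing, applying $K$ preserves the inequality. In the radial case this first step is immediate from the pointwise bound $\det{}_H \geq \det{}_A$.

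The second step is Jensen's inequality with respect to the probability measure $dt$ on $[0,1]$. Writing $Z_t = \psi_{\AC}''([a,b]_t)$ (resp.\ $Z_t = \det{}_A D^2\psi([a,b]_t)$), concavity of $K$ gives $K\left(\int_0^1 Z_t\, dt\right) \geq \int_0^1 K(Z_t)\, dt$. Chaining the monotonicity step and the Jensen step yields \eqref{eq:jensen 1D} and \eqref{eq:jensen radial}. For the equality analysis, the monotonicity step is an equality precisely when the singular part charges no mass on the segment, and the Jensen step is an equality precisely when $Z_t$ is $dt$-almost everywhere constant, i.e.\ $\psi_{\AC}''$ is constant along $[a,b]$. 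Demanding both for all endpoints forces $\psi''$ to be a constant function with no singular part in one dimension, and $\psi'$ to be a multiple of the identity in the radial case; the hypothesis $\lim_{z\to 0} K(z) = -\infty$ enters by making the right-hand side equal to $-\infty$ wherever $\psi_{\AC}''$ vanishes on a positive-measure set, so equality additionally requires $\psi_{\AC}'' > 0$ throughout.

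The main obstacle is purely measure-theoretic: giving rigorous meaning to integrating the measure $\psi''$, which may carry atoms corresponding to holes in the support of $\rho$, along the segments $[a,b]_t$, and checking that the splitting into absolutely continuous and singular parts is compatible with the affine restriction to each segment. Once the change of variables above is justified, the analytic content is entirely contained in the two one-line convexity arguments.
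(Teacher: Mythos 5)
Your proof is correct and follows essentially the same route as the paper's own (very terse) argument: the domination $\psi''\geq \psi_{\AC}''$ (resp.\ $\det{}_H D^2\psi \geq \det{}_A D^2\psi$) combined with monotonicity of $K$, followed by Jensen's inequality from concavity, with the same equality analysis. Your additional clarifications---the affine change of variables identifying $\int_0^1\psi''([a,b]_t)\,dt$ with $(\psi'(b)-\psi'(a))/(b-a)$, and the role of $\lim_{z\to 0}K(z)=-\infty$ in ruling out vanishing of $\psi_{\AC}''$---only make explicit what the paper leaves implicit.
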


\begin{proof}
We have on the one hand $ \psi''\geq \psi_\AC ''$. We next use the
concavity of $K$ to conclude. Equality occurs if $\psi''$ is
absolutely continuous and if $\psi_\AC''$ is constant. In the
two-dimensional case we use $\det{}_H D^2\psi(a) \geq \det{}_A
D^2\psi(a)$.
\end{proof}

Optimal transport is a powerful tool for reducing functional
inequalities onto pointwise inequalities ({\em e.g.} matrix
inequalities). We highlight for example the seminal paper by
McCann \cite{McC2} where the displacement convexity issue for some
energy functional is reduced to the concavity of $\det^{1/N}$. We
also refer to the works of Barthe \cite{Barthe1,Barthe2} and
Cordero-Erausquin {\em et al.} \cite{CoNaVi}. We require
simple pointwise inequalities which are extensions of the
classical Jensen's inequality.

\begin{lemma}\label{lem:jensen quadratic}
 We have the following convex-like inequality for some exponent $\gamma>0$ and any positive  $u,v,\alpha,\beta$,
\begin{equation} \label{eq:micro lemma resc}
 \alpha  \left( \dfrac{u+v}{2} \right)^{-\gamma}   - {\beta }  \left(\dfrac{u+v}{2}\right)^\gamma    \leq  (\alpha+\beta) \left(\dfrac{    u^{-\gamma} + v^{-\gamma} }2 \right)  - 2\beta \, .
\end{equation}
Equality occurs if and only if $u=v=1$. The continuous version
reads as follows. For any measurable function $u:(0,1)\to
(0,+\infty)$:
\begin{equation}
\alpha   \left(  \int_{0}^1 u(t)\, dt  \right)^{-\gamma}   -
\beta  \left(\int_{0}^1 u(t)\, dt\right)^\gamma     \leq  \left(\alpha
+ \beta\right)   \int_{0}^1 ( u(t) )^{-\gamma}\, dt - 2\beta
\, , \label{eq:variant Jensen log diff}
\end{equation}
\end{lemma}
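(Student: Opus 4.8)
The plan is to reduce both inequalities to two elementary convexity facts for the convex function $z\mapsto z^{-\gamma}$ on $(0,\infty)$. I will treat the continuous version \eqref{eq:variant Jensen log diff} first, since the discrete version \eqref{eq:micro lemma resc} is recovered by letting $u$ take the constant values $u$ and $v$ on the two halves of $(0,1)$: with that choice $\int_0^1 u(t)\,dt=(u+v)/2$ and $\int_0^1 u(t)^{-\gamma}\,dt=(u^{-\gamma}+v^{-\gamma})/2$. Abbreviate the two relevant averages by $I=\int_0^1 u(t)\,dt$ and $J=\int_0^1 u(t)^{-\gamma}\,dt$; the claim to be shown is $\alpha I^{-\gamma}-\beta I^{\gamma}\le(\alpha+\beta)J-2\beta$.

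The first step is to rearrange the target inequality by grouping the $\alpha$- and $\beta$-contributions separately. Moving all $\alpha$-terms to the left and the $\beta$-terms to the right, the inequality is equivalent to
\[
\alpha\bigl(I^{-\gamma}-J\bigr)\le\beta\bigl(I^{\gamma}+J-2\bigr).
\]
I would then prove the stronger pair of statements that the left-hand side is $\le 0$ and the right-hand side is $\ge 0$, which suffices because $\alpha,\beta>0$.

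For the left-hand side, Jensen's inequality applied to the convex map $z\mapsto z^{-\gamma}$ (convex for every $\gamma>0$) against the uniform probability measure on $(0,1)$ gives $I^{-\gamma}=\left(\int_0^1 u\right)^{-\gamma}\le\int_0^1 u^{-\gamma}=J$, so $I^{-\gamma}-J\le 0$. For the right-hand side, I combine the same bound $J\ge I^{-\gamma}$ with the arithmetic–geometric mean inequality $I^{\gamma}+I^{-\gamma}\ge 2$, valid since $I^{\gamma}\cdot I^{-\gamma}=1$; together these yield $I^{\gamma}+J-2\ge I^{\gamma}+I^{-\gamma}-2\ge 0$. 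This establishes the inequality, and the discrete case \eqref{eq:micro lemma resc} follows as the two-point specialization, with $m=(u+v)/2$ and $A=(u^{-\gamma}+v^{-\gamma})/2$ in the roles of $I$ and $J$.

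For the equality characterization in \eqref{eq:micro lemma resc}, since the rearranged left side is $\le 0$ and the right side is $\ge 0$, equality forces both to vanish. The Jensen bound $m^{-\gamma}=A$ holds with equality only when $u=v$, and with $u=v$ the identity $m^{\gamma}+m^{-\gamma}=2$ forces $m=1$ by the equality case of AM–GM; hence $u=v=1$, as claimed. There is no serious obstacle here: the only points requiring care are the sign bookkeeping in the rearrangement and the recognition that the two halves of the estimate are controlled by two \emph{distinct} elementary inequalities—Jensen for the negative part and AM–GM for the positive part—rather than by a single direct comparison.
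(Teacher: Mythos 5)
Your proof is correct, and it takes a genuinely different route from the paper's. The paper proves the two-point inequality \eqref{eq:micro lemma resc} by a variational argument: it introduces the auxiliary function
\[
(u,v)\;\longmapsto\;(\alpha+\beta)\left(\dfrac{u^{-\gamma}+v^{-\gamma}}{2}\right)-\alpha\left(\dfrac{u+v}{2}\right)^{-\gamma}+\beta\left(\dfrac{u+v}{2}\right)^{\gamma},
\]
notes that it diverges to $+\infty$ as $u,v\to 0$ or $\infty$ (so its infimum is attained at a critical point), solves the first-order conditions to show the unique critical point is $(1,1)$ where the value is $2\beta$, and then obtains the continuous version \eqref{eq:variant Jensen log diff} ``by an approximation procedure.'' You instead rearrange the inequality as $\alpha\bigl(I^{-\gamma}-J\bigr)\le\beta\bigl(I^{\gamma}+J-2\bigr)$ (in your notation $I=\int_0^1 u$, $J=\int_0^1 u^{-\gamma}$) and control each side separately: Jensen for the convex map $z\mapsto z^{-\gamma}$ gives $I^{-\gamma}\le J$, and combining this with AM--GM in the form $I^{\gamma}+I^{-\gamma}\ge 2$ gives the nonnegativity of the right side. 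This buys several things: the continuous version is proved directly, with the two-point case as an immediate specialization (the reverse of the paper's order, and with no approximation step left implicit); the equality characterization falls out of the strict convexity of $z^{-\gamma}$ together with the AM--GM equality case, rather than from uniqueness of the critical point; and you avoid the coercivity/attainment discussion entirely. In addition, your argument visibly works for $\alpha,\beta\ge 0$, not just positive. What the paper's method buys in exchange is robustness as a recipe: the same critical-point computation is reused essentially verbatim for the log-quadratic variant \eqref{eq:Jensen quadratic} in the proof of Theorem \ref{thm:logHLS confinement}, where one would otherwise have to find the appropriate splitting anew; your decomposition is the cleaner and more self-contained proof of this particular lemma.
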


\begin{proof}
We only prove \eqref{eq:micro lemma resc}. The continuous version
\eqref{eq:variant Jensen log diff} is obtained by an approximation
procedure. We introduce the auxiliary function $J$ defined as
follows.
\[
J(u,v) = (\alpha+\beta) \left(\dfrac{    u^{-\gamma} + v^{-\gamma} }2 \right) - \alpha  \left( \dfrac{u+v}{2} \right)^{-\gamma} + {\beta }  \left(\dfrac{u+v}{2}\right)^\gamma\, .
\]
Clearly, $J$ diverges towards $+\infty$ as $u\to 0$ or $v\to 0$,
and as $u\to \infty$ or $v\to \infty$, and $J$ is bounded below.
Then there exists at least one critical point. Any critical point
$(u_0,v_0)$ satisfies
\[
\left\{\begin{array}{r} -\gamma \dfrac{\alpha + \beta}2 u_0^{-\gamma - 1} + \gamma \dfrac\alpha2 \left( \dfrac{u_0+v_0}{2} \right)^{-\gamma - 1}  + \gamma \dfrac\beta 2 \left( \dfrac{u_0+v_0}{2} \right)^{\gamma - 1} = 0 \, ,  \\ -\gamma \dfrac{\alpha + \beta}2 v_0^{-\gamma - 1} + \gamma \dfrac\alpha2 \left( \dfrac{u_0+v_0}{2} \right)^{-\gamma - 1}  + \gamma \dfrac\beta 2 \left( \dfrac{u_0+v_0}{2} \right)^{\gamma - 1} = 0 \, .
\end{array}\right.
\]
Hence $u_0 = v_0$ and
\[ - \dfrac{\alpha + \beta}2 u_0^{-\gamma - 1} +   \dfrac\alpha2 u_0^{-\gamma - 1}  +  \dfrac\beta 2 u_0^{\gamma - 1} = 0\, . \]
We conclude that $u_0^{-\gamma} = u_0^{\gamma}$. Therefore the unique critical point of $J$ is $ (1,1)$.
\end{proof}

\subsection{The one-dimensional case}

The novelty here is contained in the proof of the logarithmic HLS
inequality. This brings no information by itself since the
uniqueness of the extremal functions is already known
\cite{CarLos}. We show below that the logarithmic HLS inequality
is a simple consequence of the Jensen's inequality. However our
proof relies on the existence of a critical point of the free
energy $\mathcal F$. In short, we demonstrate that any critical
point of the free energy is in fact a global minimizer. This is a
property which holds true for convex functionals. However the
functional here is not convex.

Our first Lemma is a reformulation of the Euler-Lagrange equation
for the extremal function \eqref{eq:equality1D}.
\begin{lemma}[Characterization of extremal functions]
The critical profiles
satisfy the following identity,
\begin{equation} \label{eq:charac. log}
\mu(p) = \int_{\RR}\int_{0}^1 \mu(p -tq)\mu(p-tq+q)\,
dtdq\, .
\end{equation}
In the subcritical regime $\chi<1$, the equilibrium in the
rescaled frame satisfies the following identity,
\begin{equation}
\label{eq:charac. log quadratic} \nu(p) = \int_{q\in
\RR}\int_{0}^1 \left( \chi  + \dfrac{|q|^2}2 \right)
\nu(p-tq)\nu (p-tq+q)\, dtdq\, .
\end{equation}
\end{lemma}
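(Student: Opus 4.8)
The plan is to prove each identity by showing that both sides, viewed as functions of $p$, have the same derivative and the same (vanishing) limit at infinity; since the densities are integrable this pins down the constant of integration and forces equality. I focus on \eqref{eq:charac. log}, and then treat the rescaled identity \eqref{eq:charac. log quadratic} by the same scheme with one extra term.

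First I would denote the right-hand side of \eqref{eq:charac. log} by $I(p)$ and simplify the inner integral. For fixed $q$ the substitution $a=p-tq$ turns $\int_0^1\mu(p-tq)\mu(p-tq+q)\,dt$ into $\frac1q\int_{p-q}^p\mu(a)\mu(a+q)\,da$, the orientation of the limits absorbing the sign of $q$, so that $I(p)=\int_\RR \frac1q\left(\int_{p-q}^p\mu(a)\mu(a+q)\,da\right)dq$. Differentiating under the integral sign and applying the fundamental theorem of calculus to the inner integral gives $I'(p)=\mu(p)\int_\RR\frac{\mu(p+q)-\mu(p-q)}{q}\,dq$. The crucial observation is that this $q$-integral equals $-2\,\pv\int_\RR\frac{\mu(y)}{p-y}\,dy=-2\,\partial_x(\log|x|*\mu)(p)$: the apparent singularity at $q=0$ cancels because the numerator is odd, and splitting into the two shifted principal values and substituting $y=p\pm q$ reproduces the convolution kernel with the stated sign.

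At this stage the Euler--Lagrange equation \eqref{eq:equality1D}, rewritten as $\mu'(p)=-2\mu(p)\,\partial_x(\log|x|*\mu)(p)$, shows $I'(p)=\mu'(p)$; since both $I(p)$ and $\mu(p)$ tend to $0$ as $|p|\to\infty$, the integration constant vanishes and $I\equiv\mu$. For the subcritical identity I would write the right-hand side of \eqref{eq:charac. log quadratic} as $\chi\,I_\nu(p)+\tfrac12 K(p)$, where $I_\nu$ is the analogue of $I$ for $\nu$ and $K(p)=\int_\RR\int_0^1 q^2\,\nu(p-tq)\nu(p-tq+q)\,dt\,dq$. The first piece is handled exactly as above, contributing $-2\chi\,\nu(p)\,\partial_x(\log|x|*\nu)(p)$ to the derivative. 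For $K$ the same substitution gives $K(p)=\int_\RR q\left(\int_{p-q}^p\nu(a)\nu(a+q)\,da\right)dq$, hence $\tfrac12 K'(p)=\tfrac12\nu(p)\int_\RR q\,\bigl(\nu(p+q)-\nu(p-q)\bigr)\,dq$; evaluating the two shifts by $y=p\pm q$ and using the normalization $\int\nu=1$ together with the zero-center-of-mass condition $\int y\,\nu(y)\,dy=0$ yields $\int_\RR q\,(\nu(p+q)-\nu(p-q))\,dq=-2p$, so $\tfrac12 K'(p)=-p\,\nu(p)$. Adding the two contributions reproduces exactly the rescaled Euler--Lagrange equation $\nu'(p)=-2\chi\,\nu(p)\,\partial_x(\log|x|*\nu)(p)-p\,\nu(p)$ associated with $\mathcal F_{\resc}$, and decay at infinity again fixes the constant to zero.

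The routine ingredients are the changes of variables and the two elementary moment integrals; the step demanding genuine care is the identification of $\int_\RR\frac{\mu(p+q)-\mu(p-q)}{q}\,dq$ with the principal-value convolution, since it requires separating a convergent integral into two singular principal-value pieces and justifying differentiation under the integral sign in the presence of the non-integrable kernel $1/x$. I expect this singular-integral bookkeeping, rather than any substantial estimate, to be the main obstacle.
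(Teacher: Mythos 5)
Your proof is correct, but it runs in the opposite direction from the paper's. The paper derives \eqref{eq:charac. log} \emph{forwards} from the Euler--Lagrange equation \eqref{eq:equality1D} in weak form: it tests against $\varphi$, symmetrizes the singular kernel into the difference quotient $\frac{\varphi(x)-\varphi(y)}{x-y}$ (which tames the principal value automatically, since the difference quotient of a smooth test function is bounded), writes that quotient as $\int_0^1\varphi'([x,y]_t)\,dt$, and changes variables $(x,y)\mapsto(p=[x,y]_t,\,q=y-x)$; the identity then holds up to a constant, which is killed by noting both sides have mass one. You instead \emph{verify} the candidate identity: you differentiate the right-hand side $I(p)$ directly (your substitution $a=p-tq$ is exactly the inverse of the paper's change of variables), identify $\int_\RR\frac{\mu(p+q)-\mu(p-q)}{q}\,dq$ with $-2\,\partial_x(\log|x|*\mu)(p)$, and match against the Euler--Lagrange equation, fixing the constant by decay at infinity. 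The trade-off is real: the paper's distributional argument never differentiates $\mu$ or the candidate expression and never splits a principal value, so it works at minimal regularity and transfers verbatim to the rescaled identity; your route is more elementary calculus but carries exactly the technical burden you flag --- differentiation under the integral sign and the principal-value bookkeeping --- which implicitly requires pointwise regularity and decay of $\mu$ (harmless here, since critical profiles are explicit and smooth, but a genuine hypothesis). Two small remarks: for fixing the constant, integrability of both sides already forces the constant to vanish, which is slightly cleaner than invoking decay of $I(p)$; and your moment computation for $K$ correctly isolates where the zero-center-of-mass normalization enters, a point the paper's proof leaves implicit inside the phrase ``obtained in a similar way'' for \eqref{eq:charac. log quadratic}.
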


\begin{proof}
The formulation \eqref{eq:charac. log} is equivalent to
integrating once the equation for  the critical profile. We
integrate equation \eqref{eq:equality1D} against some test
function $\varphi$.
\begin{align*}
 \int_\RR \varphi'(p) \mu(p)\, dp  & =  2 \iint_{\RR\times\RR} \dfrac{\varphi(x)}{x-y} \mu(x)\mu(y)\, dxdy \\  & =   \iint_{\RR\times\RR} \dfrac{\varphi(x)-\varphi(y)}{x-y} \mu(x)\mu(y)\, dxdy\\
&=  \iint_{\RR\times\RR}\int_{0}^1 \varphi'\left([x,y]_t\right)
\mu(x)\mu(y)\, dtdxdy  \\ & =   \int_{
\RR}\varphi'(p)\left\{\int_{\RR}\int_{0}^1  \mu(p - tq
)\mu(p-tq+q)\, dtdq\right\}\, dp\, ,
\end{align*}
where we have finally used the change of variables: $(x,y) \mapsto
(p = [x,y]_t , q = y-x) $. This holds true for any derivative
$\varphi'$, so we obtain identity \eqref{eq:charac. log} up to a constant. Since both sides of \eqref{eq:charac. log} have mass one, the constant is zero.
The identity \eqref{eq:charac. log quadratic} is obtained in a
similar way.
\end{proof}

\begin{proof}[Proof of Theorem \ref{thm:logHLS}.] Applying the change of variables formula \eqref{eq:change variables} for $x = \psi'(p)$, the functional $\mathcal F$ rewrites as follows,
\begin{align*}
\mathcal F[\rho] - \mathcal F[\mu] & = \!\!\int_\RR\!\!
\log\left(\dfrac{\mu(a)}{\psi_{\AC}''(a)}\right) \mu(a)\, da
\!+\!\! \iint_{\RR\times\RR}\!\!\!\!\!\! \log|\psi'(a)-\psi'(b)|
\mu(a)\mu(b)\, da db - \mathcal F[\mu]
\\&= - \int_\RR \!\!\log\left( {\psi_{\AC}''(a)}\right) \mu(a)\, da +
\iint_{\RR\times\RR}\!\!\!\!\!\!
\log\left(\dfrac{\psi'(a)-\psi'(b)}{a-b}\right) \mu(a)\mu(b)\, da
db
\\&= - \int_\RR \!\!\log\left( {\psi_{\AC}''(a)}\right) \mu(a)\, da + \iint_{\RR\times\RR}\!\!\!\!\!\! \log\left(\int_{0}^1\psi''([a,b]_t)\, dt\right) \mu(a)\mu(b)\, da db
\end{align*}
Using Lemma \ref{lem:interaction} for $K = \log z$ which is
increasing and concave, we deduce
\begin{align*}
\mathcal F[\rho] - \mathcal F[\mu]  \geq & - \int_\RR
\!\!\log\left( {\psi_{\AC}''(p)}\right) \mu(p)\, dp +
\iint_{\RR\times\RR}\int_{0}^1 \!\!
\log\left(\psi_{\AC}''([a,b]_t)\right) \mu(a)\mu(b)\, dt da db
\\ =& - \int_\RR \!\!\log\left( {\psi_{\AC}''(p)}\right) \mu(p)\, dp \\
&   + \int_{\RR} \!\!\log\left(\psi_{\AC}''(p)\right) \left\{
\int_{q \in\RR}\int_{0}^1  \mu(p-tq)\mu(p-tq+q)\, dt dq\right\}\,
dp = 0 \, .
\end{align*}
Equality arises if and only if the transport map $\psi''$ is a
constant function. Such a map corresponds exactly to the dilations
of the critical profile $\mu$.
\end{proof}

It is possible to extend Theorem \ref{thm:logHLS} to the rescaled energy $\mF_\resc$ \eqref{freerescaled}.

\begin{theorem}[Logarithmic HLS inequality with a quadratic confinement] \label{thm:logHLS confinement}
Assume $N = 1,2$ and $\chi<1$. The functional $\mathcal F_\resc $ is bounded from
below. The extremal functions are unique  in the set of probability
densities with zero center of mass.
\end{theorem}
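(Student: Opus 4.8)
The plan is to mirror the structure of the proof of Theorem \ref{thm:logHLS}, now working with the rescaled functional $\mathcal{F}_\resc$ and using the second characterization identity \eqref{eq:charac. log quadratic} in place of \eqref{eq:charac. log}. Let $\nu$ be the equilibrium profile in the rescaled frame (a critical point of $\mathcal{F}_\resc$) and let $\rho$ be an arbitrary probability density with zero center of mass. Let $\psi$ be the convex function whose gradient transports $\nu$ onto $\rho$, i.e. $\nabla\psi \# (\nu(a)\,da) = \rho(x)\,dx$. The goal is to show $\mathcal{F}_\resc[\rho] - \mathcal{F}_\resc[\nu] \geq 0$, with equality forcing $\rho$ to be a dilation-type transform of $\nu$ compatible with the zero-center-of-mass constraint.

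First I would expand $\mathcal{F}_\resc[\rho] - \mathcal{F}_\resc[\nu]$ using the change-of-variables formula \eqref{eq:change variables} with $U(z)=z\log z$ for the entropy term, exactly as in the unconstrained case. This produces the entropy contribution $-\int_\RR \log(\psi''_\AC(a))\,\nu(a)\,da$ and the interaction contribution $\iint \log\bigl((\psi'(a)-\psi'(b))/(a-b)\bigr)\nu(a)\nu(b)\,da\,db$. The new feature is the quadratic confinement term $\tfrac12\int|x|^2\rho\,dx - \tfrac12\int|a|^2\nu\,da = \tfrac12\int(|\psi'(a)|^2 - |a|^2)\,\nu(a)\,da$. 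I would rewrite the confinement difference in the same $[a,b]_t$ transport geometry: writing $|\psi'(a)|^2$ against the mass of $\nu$ and symmetrizing, one expresses the confinement contribution as a double integral weighted by $\nu(a)\nu(b)$ involving the increment $\psi'(a)-\psi'(b)$, so that both the interaction and confinement parts become integrals of the same transported quantity $\int_0^1\psi''([a,b]_t)\,dt = (\psi'(a)-\psi'(b))/(a-b)$.

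The key step is then to apply Lemma \ref{lem:jensen quadratic} pointwise, with the variable $u(t) = \psi''_\AC([a,b]_t)$ and with the coefficients $\alpha,\beta$ chosen so that the combination $\alpha z^{-\gamma} - \beta z^{\gamma}$ reproduces exactly the logarithmic interaction plus the quadratic confinement weights appearing in the characterization \eqref{eq:charac. log quadratic} — the factor $\chi + |q|^2/2$ in that identity is precisely what matches the two terms $\alpha=\chi$ and the $|q|^2/2$-weighted confinement piece. Concretely, after applying Lemma \ref{lem:interaction} (or its continuous Jensen variant) to combine the convexity in $\psi''_\AC$, the inequality \eqref{eq:variant Jensen log diff} lets me bound the sum of interaction and confinement contributions below by $\int_\RR \log(\psi''_\AC(p))\,\{\int_\RR\int_0^1(\chi+|q|^2/2)\nu(p-tq)\nu(p-tq+q)\,dt\,dq\}\,dp$, which by \eqref{eq:charac. log quadratic} equals $\int_\RR \log(\psi''_\AC(p))\,\nu(p)\,dp$. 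This cancels the entropy term and yields $\mathcal{F}_\resc[\rho]-\mathcal{F}_\resc[\nu]\geq 0$. The radial two-dimensional case proceeds identically using \eqref{eq:jensen radial} and $\det_A D^2\psi$ in place of $\psi''_\AC$.

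The main obstacle I expect is the precise bookkeeping in matching the confinement term to the $\gamma$-exponent machinery of Lemma \ref{lem:jensen quadratic}: one must verify that the quadratic term $\tfrac12\int|\psi'(a)|^2\nu\,da$, after symmetrization into the $(p,q)$ variables, genuinely aligns with the $-\beta z^{\gamma}$ side of inequality \eqref{eq:micro lemma resc} with the same exponent $\gamma$ that the logarithmic derivative forces on the $\alpha z^{-\gamma}$ side — recall that $\log z$ is recovered as a $\gamma\to 0$ limit of $(z^{-\gamma}-1)/\gamma$, so a careful limiting or direct argument is needed to reconcile the logarithmic interaction with the algebraic confinement. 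Handling the equality case is then the other delicate point: the equality clause in Lemma \ref{lem:jensen quadratic} forces $\psi''_\AC \equiv 1$ (i.e. $u=v=1$), which combined with the zero-center-of-mass constraint pins down $\psi'$ as the identity and hence $\rho=\nu$, giving uniqueness of the extremal function in the constrained class.
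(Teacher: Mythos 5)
Your scaffolding is exactly the paper's: expand $\mathcal F_\resc[\rho]-\mathcal F_\resc[\nu]$ along the transport map, use the zero center of mass to symmetrize the confinement term so that both interaction and confinement become functions of $z=\int_0^1\psi''([a,b]_t)\,dt$, cancel the entropy term through the rescaled characterization \eqref{eq:charac. log quadratic}, and read uniqueness off the equality case ($\psi''_{\AC}\equiv 1$ plus zero center of mass forcing $\rho=\nu$). All of that is right, including the observation that the weight $\chi+|q|^2/2$ in \eqref{eq:charac. log quadratic} is what must emerge from the coefficients. But there is a genuine gap at precisely the step you flag as the "main obstacle," and neither of your proposed fixes closes it. Lemma \ref{lem:jensen quadratic} cannot serve here: inequality \eqref{eq:micro lemma resc} ties both powers to the \emph{same} exponent $\pm\gamma$, which is why it fits the dissipation computation of Theorem \ref{thm:dynamics} (there the interaction differentiates to $z^{-1}$ and the confinement to $z^{+1}$, so $\gamma=1$ works), whereas for the energy itself you must bound below the mixed quantity $\chi\log z+\tfrac{|a-b|^2}{4}\,z^2$, a logarithm against a square — no choice of $\gamma,\alpha,\beta$ reproduces this combination. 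The limiting route you suggest also fails: rearranging \eqref{eq:micro lemma resc} as $\alpha(\bar u^{-\gamma}-1)-\beta(\bar u^{\gamma}-1)\le(\alpha+\beta)(\overline{u^{-\gamma}}-1)$, dividing by $\gamma$ and letting $\gamma\to 0$ turns the would-be quadratic term into just another logarithm, and the limit collapses to plain Jensen $\log\bar u\ge\overline{\log u}$ — the confinement information is lost entirely.

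What is actually needed (and is the sole substantive content of the paper's sketch) is a \emph{new} convex-like inequality tailored to the energy rather than to its dissipation:
\[
\alpha\log\left(\frac{u+v}{2}\right)+\beta\left(\frac{u+v}{2}\right)^2\ \ge\ (\alpha+2\beta)\,\frac{\log u+\log v}{2}+\beta\,,
\]
for all positive $u,v,\alpha,\beta$, with equality if and only if $u=v=1$; it is proved by the same critical-point analysis as Lemma \ref{lem:jensen quadratic}, and the coefficient $\alpha+2\beta$ is forced by first-order matching at $u=v=1$. With its continuous version, the choice $\alpha=\chi$, $\beta=|a-b|^2/4$ produces exactly the weight $\chi+|a-b|^2/2$ required by \eqref{eq:charac. log quadratic}, the entropy cancels, and the remainder of your argument — including the equality discussion — goes through verbatim (and analogously in the radial two-dimensional case). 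So: right decomposition, right coefficients, right uniqueness mechanism, but the key pointwise inequality is missing, and the tool you cite in its place cannot be made to deliver it.
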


We give below the main lines of the proof following a direct argument analogous to the proof of Theorem \ref{thm:logHLS}. Note that the uniqueness of the extremal functions in dimension $N=1$ or in dimension $N=2$ in the class of radially symmetric densities is a consequence of Theorem \ref{thm:dynamics}.

\begin{proof}[Sketch of proof of Theorem \ref{thm:logHLS confinement}]
The key point consists in replacing the Jensen's inequality  with the
following convex-like inequality. For any positive
$u,v,\alpha,\beta$ the following inequality holds true.
\begin{equation} \label{eq:Jensen quadratic}
\alpha \log \left( \dfrac{u+v}{2} \right)   + {\beta }
\left(\dfrac{u+v}{2}\right)^2   \geq  (\alpha+2\beta)
\left(\dfrac{  \log  u +\log v }2 \right) + \beta\, ,
\end{equation}
Equality occurs if and only if $u = v= 1$. It reduces to the usual
Jensen's inequality when $\beta = 0$. The proof of \eqref{eq:Jensen quadratic} is analogous to Lemma \ref{lem:jensen quadratic}. The proof of uniqueness for the extremal functions of $\mF_\resc$ is a mixture between the proofs of Theorem \ref{thm:logHLS} and Theorem \ref{thm:dynamics}.
\end{proof}

\subsection{The two-dimensional case}

We restrict to radially symmetric functions in the two-dimensional
case due to decreasing rearrangement \cite{liebloss,Be,CarLos}. We recall the Newton's theorem for Poisson potential: the
field induced by a radially symmetric distribution of masses
outside a given ball is equivalent to the field induced by a point
at the center of the ball \cite{liebloss}. Equivalently it reads
\begin{equation}\label{eq:Newton}
\dfrac12 \int_{\theta = 0}^{2\pi} \log\left(r^2 + s^2 -
2rs\cos(\theta)\right) \, d\theta = 2\pi \log  \max(r,s)  \, .
\end{equation}
As a consequence we can rewrite the functional $\mF$ simpler
under radial symmetry:
\begin{equation*}
\label{eq:log func radial} \dfrac1{2\pi}\mF[\rho] =
\dfrac12\int_{\RR_+} \rho(r)\log(\rho(r))\, r dr +  \chi
\int_{\RR_+} \rho(r) M[\rho](r) \log(r)Ê \,  rdr\, .
\end{equation*}

The following characterizations are direct
consequences of \eqref{eq:equality2D} and \eqref{eq:radialKS}.
\begin{lemma}[Characterization of extremal functions under radial symmetry]
The critical profiles satisfy the following identity
\begin{equation} \label{eq:StStcharacterization radial}
 \dfrac12  \mu(b)  =  2 \int_{b}^{+\infty} \mu(a) ÊM[\mu](a)   \dfrac{1}a \, da  \, .
\end{equation}
In the subcritical regime $\chi<1$, the radially-symmetric equilibrium satisfies the following identity
\begin{equation}
\dfrac12  \nu(b)  = \int_{b}^{+\infty} \nu(a)\left( 2\chi
M[\nu](a)\dfrac{1}a + a \right) \, da\, .
\label{eq:StStcharacterization radial resc}
\end{equation}
\end{lemma}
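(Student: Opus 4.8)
The plan is to obtain both identities by integrating the corresponding radial stationary equations once, in direct analogy with the one-dimensional characterization Lemma proved above. The starting points are the Euler--Lagrange equation \eqref{eq:equality2D} for the critical profile $\mu$ and the stationary version of the rescaled radial equation (that is, \eqref{eq:radialKS} complemented by the confinement flux) for the rescaled equilibrium $\nu$. In each case the aim is to divide the first-order relation by $r$ and integrate over $(b,+\infty)$, letting the diffusion term telescope to $-\tfrac12$ times the density.

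For \eqref{eq:StStcharacterization radial} I would start from \eqref{eq:equality2D}, namely $\tfrac12 r\mu'(r)+2\mu(r)M[\mu](r)=0$, divide through by $r$ to get
\begin{equation*}
\tfrac12 \mu'(r) = -2\mu(r)M[\mu](r)\,\tfrac1r\, ,
\end{equation*}
and integrate on $(b,+\infty)$. The left-hand side telescopes to $-\tfrac12\mu(b)$ once the boundary value $\mu(r)$ is discarded at infinity, which yields \eqref{eq:StStcharacterization radial} directly.

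For \eqref{eq:StStcharacterization radial resc} the extra step is to produce the radial stationary equation for the rescaled flow. Writing \eqref{eq:KSres} in radial coordinates for $N=2$, the confinement term $\nabla\cdot[x\rho]$ becomes $\tfrac1r\partial_r(r^2\rho)$, so multiplying by $r$ contributes the additional flux $\partial_r[r^2\rho]$ alongside the two fluxes already present in \eqref{eq:radialKS}. Setting $\partial_t\nu=0$ and integrating once from $0$ to $r$ — where every flux vanishes because $M[\nu](0)=0$ and $r\nu',\,r^2\nu\to 0$ at the origin — fixes the integration constant to zero and produces the first-order relation
\begin{equation*}
\tfrac12 r\nu'(r) + 2\chi\,\nu(r)M[\nu](r) + r^2\nu(r) = 0\, .
\end{equation*}
Dividing by $r$ and integrating on $(b,+\infty)$ then gives \eqref{eq:StStcharacterization radial resc} exactly as in the previous case.

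The only genuine points requiring care — the main obstacle, though a mild one — are the decay hypotheses needed to discard the boundary contributions at infinity: one needs $\mu(r)\to 0$ and, for the rescaled case, $r^2\nu(r)\to 0$ together with integrability of $r\nu(r)$ on $(b,+\infty)$. The former follows from the explicit profile \eqref{eq:equi}, while the latter is guaranteed by the finiteness of the second moment \eqref{eq:2nd moment nu} of $\nu$ and the smoothness of the stationary state. Fixing the integration constant at the origin rather than at infinity in the intermediate first-integral step is the cleanest way to sidestep any delicate decay estimate at $+\infty$.
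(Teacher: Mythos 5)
Your proof is correct and follows essentially the same route as the paper, which states the lemma as a ``direct consequence'' of \eqref{eq:equality2D} and \eqref{eq:radialKS}, i.e., exactly the once-integrated stationary relations you write down. Your derivation simply fills in the details the paper leaves implicit: adding the confinement flux $\partial_r[r^2\rho]$ to \eqref{eq:radialKS}, fixing the integration constant at the origin, dividing by $r$, integrating over $(b,+\infty)$, and checking the decay needed to discard boundary terms at infinity.
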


We are now ready to examinate the logarithmic
Hardy-Littlewood-Sobolev inequality in the two-dimensional radial
setting.
\begin{proof}[Proof of Theorem \ref{thm:logHLS}.]
We apply the change of variables formula \eqref{eq:change variables} for $ r = \psi'(a)$ to get:
\begin{equation*}
\dfrac1{2\pi} \mF[\rho] = \dfrac12 \int_{\RR_+} \mu(a )
\log\left(\dfrac{\mu(a)}{\det{}_A D^2\psi(a)}\right)     a\, da +
2  \int_{\RR_+}  \mu(a)    M[\mu](a)   \log\left(  \psi'(a)
\right)   \, a  da \, ,
 \end{equation*}
 where we have used $M[\rho](r) = M[\mu](a)$. We have consequently,
 \begin{align}
\dfrac1{2\pi}  \mF[\rho] - \dfrac1{2\pi}  \mF[\mu] = & - \dfrac12 \int_{\RR_+} \mu(a ) \log\left(\det{}_A D^2\psi(a)\right)   a\, da  \nonumber \\
& +  \int_{\RR_+}  \mu(a)   M[\mu](a)    \log\left( \dfrac{\left( \psi'\right)^2(a)}{a^2} \right)    a\,   da\, . \label{eq:2D radial core}
\end{align}
The last contribution of \eqref{eq:2D radial core} can be
evaluated using Lemma \ref{lem:interaction}
\begin{align*}
& \int_{\RR_+}   \mu(a)    M[\mu](a)   \log\left(  \int_{0}^a \left( \det{}_H D^2 \psi(b) \right) \dfrac  {2b}{a^2} \, db \right)   a \,   da \\
   \geq &  \int_{\RR_+}\int_{0}^a   \mu(a)    M[\mu](a)   \log\left( \det{}_A D^2 \psi(b) \right)  \dfrac{2b}a \,   db da \\
  = & \int_{\RR_+}  \log\left(\det{}_A D^2 \psi(b)  \right)
\left\{ 2 \int_{b}^{+\infty}    \mu(a)    M[\mu](a) \dfrac1a\, da\right\}
b\, db \,.
\end{align*}
We obtain from the characterization
\eqref{eq:StStcharacterization radial} $ \mF[\rho] \geq \mF[\mu]$. Again equality occurs if and only if the transport map $\psi'$ is
a multiple of the identity.
\end{proof}

\subsection{Obstruction in dimension higher than three}

We explain in this Section why the above strategy fails to work in
dimension higher than three, even in the radially-symmetric
setting. A first remark is that Newton's Theorem is not valid,
since the logarithm kernel is not the fundamental solution of the
Poisson equation, although this is not essential as shown in dimension one. It turns out that our strategy works
fine for any interaction kernel $W(x) = |x|^k/k$, for $k\in
(-N,2-N]$. The case $k = 0$ corresponds to $W = \log|x|$. The case
$k = -N$ is critical for integrability reasons. The case $k = 2-N$
is exactly the harmonic case for which the Newton's Theorem holds
true. We refer to \cite{Calvez.Carrillo.I} for details in the case
$k\in (-N,2-N]$. Hence the case $k = 0$ is out of range when
$N\geq 3$. We sketch below where some obstruction appears when $N
= 3$.

The identity which generalizes \eqref{eq:Newton} reads as
follows. If $r>s$ we have
\begin{align*}
\dfrac12 \int_{\theta = 0}^{\pi} \log\left(r^2 + s^2 -
2rs\cos(\theta)\right) \sin(\theta)\, d\theta
= \dfrac12 H\left(\dfrac{s}{r}\right) + 2 \log r\, ,
\end{align*}
where $H$ is defined as follows for $t\in (0,1)$,
\[H(t) = \dfrac12  \dfrac{(1+t)^2}{t} \log\left(1+t\right) - \dfrac12  \dfrac{(1-t)^2}{t} \log\left(1-t\right) - 2\, . \]
To continue our strategy, it is required to decouple the variables
$r$ and $s$, and more precisely to make the quantity $r^N - s^N$
appearing. As a matter of fact this is homogeneous to the
determinant (under radial symmetry), which is the key quantity to
look at in dimension higher than two. Therefore we seek a
convex-like inequality
\begin{equation*}
\label{eq:convex-like}
H(t) \geq \alpha + \beta \log\left(1 - t^N\right)\, , \end{equation*}
where $\alpha$ and $\beta$ are suitable constants determined by
zero and first order conditions. If we denote $\varphi(t) = \log(1
- t^N)$, this is equivalent to say that $H\circ \varphi^{-1}$ is
convex. However simple computations show that it is indeed a concave function. In the case of an interaction
kernel having homogeneity $k\in (-N,2-N]$ we show in
\cite{Calvez.Carrillo.I} that the corresponding function
$H\circ\varphi^{-1}$ is convex.

\section{Exponential convergence towards the self-similar profile}

\subsection{The one-dimensional case}
To illustrate the strategy of proof of Theorem \ref{thm:dynamics}, we show a formal computation in the critical case $\chi = 1$.
Up to our knowledge, the regularity of solutions
under very weak assumptions is still an open problem
In particular it
is not known whether the solutions satisfy the identity
\eqref{eq:decreasing energy} or not. So the following computation
is questionable because the velocity field $\partial_x( \log
\rho(t,x) + 2  \log |x|*\rho(t,x) ) $ is not clearly defined in
$L^2(\rho(t,x)dx)$.

We compute formally the evolution of the Wasserstein distance to
one of the equilibria \eqref{eq:equi} in the critical case $\chi =
1$. Notice that equilibria are infinitely far from each other with
respect to the Wasserstein distance \cite{BCC-Critical}. Using the
gradient flow structure with respect to $W_2$, one obtains the
following formula for the derivative of $F(t) =
W_2(\rho(t),\mu)^2$, see \cite[Chapter 8]{Villani} and
\cite{AmbrosioGigliSavare02p}.
\begin{align*}
\dfrac12 \dfrac d{dt} F(t) & = \int_\RR (\phi'(t,x) - x)\left( \partial_x \left( \log \rho(t,x) + 2  \log |x|*\rho(t,x) \right) \right) \rho(t,x)\, dx \\
& = - \int_\RR \phi''(t,x) \rho(t,x)\, dx +   \iint_{\RR\times\RR} \dfrac{\phi'(t,x) - \phi'(t,y)}{x-y}\rho(t,x)\rho(t,y)\, dxdy \\
& = - \!\int_\RR \left(\psi''(t,a)\right)^{-1} \!\!\mu(a)\, da + \! \iint_{\RR\times\RR} \left(\dfrac{\psi'(t,a) - \psi'(t,b)}{a-b}\right)^{-1}\!\!\mu(a)\mu(b)\, dadb \\
& \leq  - \int_\RR \left(\psi''(t,a)\right)^{-1} \!\!\mu(a)\, da +
\iint_{\RR\times\RR}   \int_{0}^1
\left(\psi''(t,[a,b]_s)\right)^{-1} \!\!\mu(a) \mu(b)\, ds da db\,
.
\end{align*}
We recognize the characterization \eqref{eq:charac. log}. Hence,
we have at least formally $F'(t) \leq 0$. Observe that the Lemma
\ref{lem:interaction} has been used with $K(z) = -z^{-1}$.

The same strategy is valid in the subcritical case $\chi<1$ for
which we know that solutions are regular enough to ensure the
validity of the computations. As a matter of fact, the density
$\rho(t,x)$ is everywhere positive and thus $\psi''$ is
absolutely continuous. On the other hand the dissipation of energy
is well-defined and the dissipation estimate \eqref{eq:decreasing
energy} holds true \cite{BDP}.

\begin{proof}[Proof of Theorem \ref{thm:dynamics}]
We compute the evolution of $F(t) = W_2(\rho(t),\nu)^2$:
\begin{align*}
\dfrac12 \dfrac d{dt}F(t)  = & \int_\RR (\phi'(t,x) - x)\left( \partial_x \left( \log \rho(t,x) + 2\chi \log |x|*\rho(t,x) + \dfrac{|x|^2}2 \right) \right) \rho(t,x)\, dx \\
 =& - \int_\RR \phi''(t,x) \rho(t,x)\, dx + \chi \iint_{\RR\times\RR} \dfrac{\phi'(t,x) - \phi'(t,y)}{x-y}\rho(t,x)\rho(t,y)\, dxdy \\
&   - \dfrac12 \iint_{\RR\times\RR} (\phi'(t,x) - \phi'(t,y))(x-y)\rho(t,x)\rho(t,y)\, dxdy \\
& + 2 \int_\RR \phi'(t,x) x \rho(t,x)\, dx  + 1 - \chi - \int_\RR |x|^2\rho(t,x)\, dx   \, ,
\end{align*}
where we have used the fact that the center of mass is zero to
double the variables. We rewrite each contribution using the
reverse transport map $\psi'$:
\begin{align*}
\dfrac12 \dfrac d{dt}F(t)  = & - \int_\RR \left(\psi''(t,a)\right)^{-1}\!\! \nu(a)\, da + \chi \iint_{\RR\times\RR} \left(\dfrac{\psi'(t,a) - \psi'(t,b)}{a-b}\right)^{-1}\!\!\!\!\!\nu(a)\nu(b)\, dadb \\
&  - \dfrac12 \iint_{\RR\times\RR} |a-b|^2\dfrac{ \psi'(t,a) - \psi'(t,b) }{a-b}\nu(a)\nu(b)\, da db\\
&   + 1-\chi - \int_\RR |\psi'(t,a)|^2\nu(a)\, dx + 2
\int_\RR a \psi'(t,a) \nu(t,a)\, da
\\ \leq & \iint_{\RR\times\RR}  \left[ \left( \chi + \dfrac{|a-b|^2}2 \right) \int_{0}^1 \left(\psi''(t,[a,b]_s)\right)^{-1} ds  - |a-b|^2 \right] \nu(a) \nu(b)\, da db\\
&  - \int_\RR \left(\psi''(t,a)\right)^{-1} \nu(a)\, da  + 2
\int_\RR |a|^2 \nu(a)\, da - \int_\RR |\psi'(t,a) - a|^2\nu(a)\,
da \, ,
\end{align*}
where we have used that the second moment of the stationary state
is explicitly given by \eqref{eq:2nd moment nu}. Applying now
\eqref{eq:variant Jensen log diff} for $\gamma = 1$, $\alpha =
\chi $ and $\beta = |a-b|^2/2$, we deduce
$$
\dfrac12 \dfrac d{dt}F(t) \leq - \int_\RR |\psi'(t,a) -
a|^2\nu(a)\, da = - W_2(\rho(t),\nu)^2 = - F(t)\, ,
$$
giving the desired inequality.
\end{proof}

\subsection{The two-dimensional radially-symmetric case}

Proving convergence towards a self-similar profile in the rescaled
logarithmic case under radial symmetry goes as previously.

\begin{proof}[Proof of Theorem \ref{thm:dynamics}.]
The virial computation reads equivalently
\begin{align*}
\dfrac12 \dfrac d{dt} \int_{\RR_+} \rho(t,r) r^3 \, dr = &  -
\int_{\RR_+} r \left( \dfrac12\partial_r \log\rho(t,r) + 2\chi
{M[\rho](t,r)}\dfrac1{r} + r\right) \rho(t,r) r\, dr
\\ = & \int_{\RR_+} \rho(t,r) r\, dr - 2\chi \int_{\RR_+}
M[\rho](t,r) \rho(t,r) r \, dr -\! \int_{\RR_+} \!\! \rho(t,r)
r^3\, dr
\\
= & 1 - \chi - \int_{\RR_+}  \rho(t,r) r^3\, dr = \int_{\RR_+}
\nu(a) a^3 \, da - \int_{\RR_+}  \rho(t,r) r^3\, dr
\end{align*}
We compute again the evolution of the Wasserstein distance $F(t) =
W_2(\rho(t),\nu)^2$.
\begin{align*}
& \dfrac12\dfrac{d}{dt} F(t)
=  \int_{\RR_+} \left(\phi'(r) - r\right)\left( \dfrac12\partial_r \log\rho(t,r) + 2\chi {M[\rho](t,r)}\dfrac1{r} + r\right) \rho(t,r) r\, dr\\
 = & \dfrac12\int_{\RR_+} r \phi'(r) \partial_r \rho(t,r)\, dr +2 \chi\int_{\RR_+} \phi'(r) M[\rho](t,r)\rho(t,r)\, dr -  \int_{\RR_+} \phi'(r) \rho(t,r)r^2 \, dr \\
& + \int_{\RR_+} \nu(a) a^3 \, da - \int_{\RR_+}  \rho(t,r) r^3\,
dr  + 2 \int_{\RR_+} \phi'(r) \rho(t,r) r^2 \, dr
\\
 \leq &  2\chi \int_{\RR_+} \left(\int_0^a\det  D^2\psi(b)\dfrac{2b}{a^2}\, db\right)^{-1/2} M[\nu](a)\nu(a) a \, da \\
& - \int_{\RR_+} \left(\dfrac1{\det D^2\psi(b)} \right)^{1/2} \nu(b) b \, db  - \int_{\RR_+} \left(\int_0^a\det  D^2\psi(b)\dfrac{2b}{a^2}\, db\right)^{1/2} a^2\nu(a) a da \\
&  +2 \int_{\RR_+} \nu(a) a^3 \, da -  \int_{\RR_+} |\phi'(r) - r
|^2 \rho(t,r) r\, dr \, .
\end{align*}
The last step in the inequality is a consequence of the arithmetic
and geometric means inequality: $-\partial_r(r\phi'(r))/r = -
\phi'(r)/r - \phi''(r) \leq - 2 (\phi''(r)\phi'(r)/r)^{1/2}$. Next
we use Lemma \ref{lem:jensen quadratic} to handle the interaction
contribution. More precisely, we choose $\gamma = 1/2$, $\alpha =
2\chi M[\nu](a)$ and $\beta = a^2$. One gets finally:
\begin{align*}
\dfrac12\dfrac{d}{dt} F(t)  \leq &  - F(t) - \int_{\RR_+} \left(\det  D^2\psi(b)\right)^{-1/2} \nu(b) b \, db \\
&  + \int_{\RR_+}  \int_{b}^{+\infty}  \left( 2\chi M[\nu](a) +
a^2  \right)\left(\det  D^2\psi(b)\right)^{-1/2}\dfrac{2b}{a^2}
\nu(a) a \, db da \, .
\end{align*}
We conlude using characterization \eqref{eq:StStcharacterization
radial resc}.
\end{proof}


\section{Contraction in the one-dimensional case}

The aim of this Section is to point out the peculiar structure of
the modified one-dimensional Keller-Segel system \eqref{eq:KS1D}.

\begin{lemma}
Equation \eqref{eq:KS1D} rewrites in Fourier variables as:
\begin{equation}\label{eq:1DKS Fourier}
\partial_t \hat \rho(t,\xi) = |\xi|^2 \left( - \hat \rho(t,\xi) + \chi \int_{0}^1 \hat\rho(t,\sigma\xi) \hat \rho(t,(1-\sigma)\xi)\, d\sigma\right)\, .
\end{equation}
\end{lemma}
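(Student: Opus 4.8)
The plan is to take the Fourier transform of each term in \eqref{eq:KS1D}, using the convention $\hat\rho(\xi) = \int_\RR \rho(x)e^{-ix\xi}\,dx$. The linear diffusion $\partial^2_{xx}\rho$ transforms immediately into $-|\xi|^2\hat\rho(\xi)$, which accounts for the first term on the right-hand side of \eqref{eq:1DKS Fourier}. The whole difficulty is therefore to identify the transform of the interaction term $2\chi\,\partial_x\bigl(\rho\,\partial_x(\log|x|*\rho)\bigr)$ with $\chi|\xi|^2\int_0^1\hat\rho(\sigma\xi)\hat\rho((1-\sigma)\xi)\,d\sigma$.

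First I would compute the transform of the interaction drift. Since $\partial_x\log|x| = \pv\,\tfrac1x$ and $\widehat{\pv\,(1/x)}(\xi) = -i\pi\,\sign(\xi)$ (the Hilbert transform multiplier), the convolution theorem gives $\widehat{\partial_x(\log|x|*\rho)}(\xi) = -i\pi\,\sign(\xi)\,\hat\rho(\xi)$. Next, using that the Fourier transform turns products into convolutions (with the $1/2\pi$ normalization for the chosen convention) and that the outer derivative $\partial_x$ amounts to multiplication by $i\xi$, I obtain
\begin{equation*}
\widehat{\partial_x\bigl(\rho\,\partial_x(\log|x|*\rho)\bigr)}(\xi) = \frac{\xi}{2}\int_\RR \sign(\eta)\,\hat\rho(\xi-\eta)\,\hat\rho(\eta)\,d\eta\,.
\end{equation*}
Multiplying by $2\chi$ leaves $\chi\xi\,I(\xi)$, where $I(\xi) = \int_\RR \sign(\eta)\,\hat\rho(\xi-\eta)\,\hat\rho(\eta)\,d\eta$.

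The key algebraic step is to collapse this full-line integral onto a bounded segment. Performing the substitution $\eta\mapsto\xi-\eta$ inside $I(\xi)$ and averaging the two resulting expressions, I may replace $\sign(\eta)$ by $\tfrac12\bigl(\sign(\eta)+\sign(\xi-\eta)\bigr)$. For $\xi>0$ this kernel equals $1$ on $(0,\xi)$ and vanishes elsewhere (and symmetrically for $\xi<0$), so that $I(\xi) = \int_0^\xi \hat\rho(\xi-\eta)\,\hat\rho(\eta)\,d\eta$. Rescaling $\eta=\sigma\xi$ then yields $I(\xi) = \xi\int_0^1\hat\rho(\sigma\xi)\,\hat\rho((1-\sigma)\xi)\,d\sigma$, uniformly in the sign of $\xi$, and hence $\chi\xi\,I(\xi) = \chi\xi^2\int_0^1\hat\rho(\sigma\xi)\,\hat\rho((1-\sigma)\xi)\,d\sigma$. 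Since $\xi^2=|\xi|^2$, combining this with the diffusion term gives exactly \eqref{eq:1DKS Fourier}.

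The main obstacle is not the symmetrization, which is elementary once set up, but the rigorous justification of the manipulations involving the distribution $\pv\,(1/x)$: one must make sense of the Fourier transform of this singular kernel and of the product $\rho\,\partial_x(\log|x|*\rho)$ as a genuine convolution of transforms. Under the standing assumption of finite second moment, $\hat\rho$ is bounded and $C^2$, and these steps are justified by regularizing the kernel and passing to the limit; I would relegate this approximation to a density argument.
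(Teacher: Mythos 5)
Your computation is correct, and it reaches \eqref{eq:1DKS Fourier} by a genuinely different route from the paper. You work entirely on the frequency side: the Hilbert-transform multiplier $\widehat{\pv(1/x)}(\xi) = -i\pi\,\sign(\xi)$, the product-to-convolution theorem, and then the symmetrization $\eta\mapsto\xi-\eta$, which turns the kernel into $\tfrac12\left(\sign(\eta)+\sign(\xi-\eta)\right)$, i.e.\ the indicator of the segment joining $0$ to $\xi$; rescaling gives the $\sigma$-integral, uniformly in the sign of $\xi$ as you say. The paper instead stays in physical space: it tests the equation against $e^{i\xi x}$, integrates by parts, symmetrizes the resulting double integral in $(x,y)$ using the antisymmetry of $1/(x-y)$, and then invokes the elementary identity
\begin{equation*}
\frac{e^{i\xi x}-e^{i\xi y}}{x-y} = i\xi \int_0^1 e^{i\xi[x,y]_\sigma}\,d\sigma\,,
\end{equation*}
after which the double integral factorizes into the product of two Fourier transforms. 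The two symmetrizations are dual to one another (yours in the frequency variable, the paper's in the space variables), but the technical cost differs: the paper's symmetrized kernel $\bigl(e^{i\xi x}-e^{i\xi y}\bigr)/(x-y)$ is bounded by $|\xi|$, so the principal-value singularity is tamed automatically and every integral is absolutely convergent, with no distribution theory required. Your route needs the distributional Fourier transform of $\pv(1/x)$ and the convolution theorem applied to $\rho\cdot\left(\pv(1/x)*\rho\right)$, which you correctly flag as the delicate point; note, however, that boundedness and $C^2$ regularity of $\hat\rho$ alone do not make the full-line integral $\int_\RR \sign(\eta)\,\hat\rho(\xi-\eta)\,\hat\rho(\eta)\,d\eta$ converge --- you should invoke, e.g., $\rho\in L^1\cap L^2$ so that $\hat\rho\in L^2\cap L^\infty$ and the convolution of two $L^2$ functions is well defined pointwise, or else carry the regularization through the symmetrization step, after which only the bounded segment from $0$ to $\xi$ survives. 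With that caveat made explicit, your argument is complete.
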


\begin{proof}
We test equation \eqref{eq:KS1D} against $\exp\left( i \xi x
\right)$:
\begin{align*}
\dfrac \partial{\partial t}\widehat{\rho}(t,\xi) & =
\int_\RR \left( \partial_{xx} \rho(t,x) + 2\chi \partial_x\left(\rho(t,x)\left( \pv\dfrac1x\right) * \rho(t,x)\right)  \right)e^{i\xi x}\, dx \\
& = - |\xi|^2 \hat \rho(t,\xi) - \chi i \xi \iint_{\RR\times \RR} \rho(t,x) \dfrac{e^{i\xi x} - e^{i\xi y}}{x-y}\rho(t,y)\, dx dy\\
& = - |\xi|^2 \hat \rho(t,\xi) + \chi |\xi|^2 \iint_{\RR\times \RR} \rho(t,x) \left(\int_{0}^1 e^{i\xi[x,y]_\sigma}\, d\sigma \right)\rho(t,y)\, dx dy \\
& = - |\xi|^2 \hat \rho(t,\xi) + \chi |\xi|^2 \int_{0}^1
\!\!\!\left(\int_{\RR} \rho(t,x) e^{i(1-\sigma)\xi x }\, dx
\right)\left(\int_{\RR} \rho(t,y) e^{i\sigma\xi y }\, dy \right)
\, d\sigma \, ,
\end{align*}
which gives the desired formulation.
\end{proof}

According to \eqref{eq:1DKS Fourier} the information propagates
from lower to higher frequencies. The evolution of
$\hat\rho(t,\xi)$ requires the knowledge of lower frequencies
$|\xi'|<|\xi|$ due to the integral contribution. This is of
particular importance for designing a numerical scheme. Indeed
there is no loss of information after truncation of the frequency
box.

\begin{figure}
\begin{center}
\includegraphics[width = .8\linewidth]{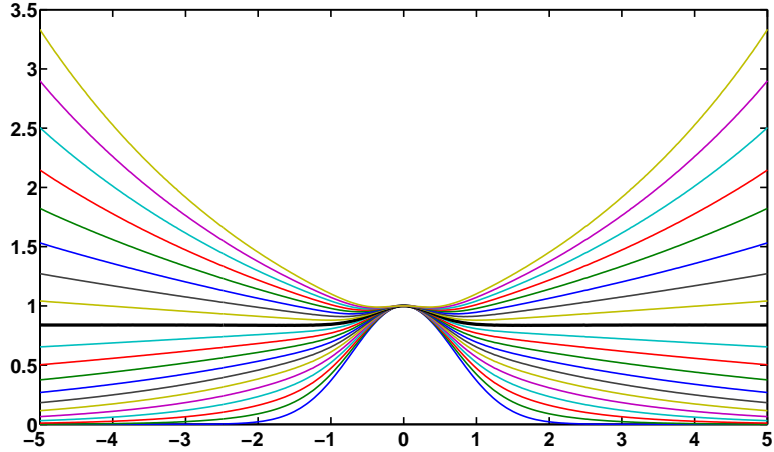}
\caption{Simulation of \eqref{eq:1DKS Fourier} in the
supercritical case $\chi>1$ for successive times. The blow-up time
is plotted in bolded dark. } \label{fig:contraction}
\end{center}
\end{figure}

\begin{remark}[Analogy with 1D Boltzmann]
It is worthy to metion that the integral operator in the
right-hand-side of \eqref{eq:1DKS Fourier} is reminiscent of the
homogeneous Boltzmann equations in 1D used for granular gases
\cite{CT} or wealth distribution models \cite{DMT} in Fourier
variables.
\end{remark}

\begin{remark}[Evidence for blow-up in the supercritical case]
We can directly notice the occurence of blow-up when $\chi>1$ from
\eqref{eq:1DKS Fourier}. Observe that for $|\xi|\ll1$, the
right-hand-side is equivalent to:
\begin{equation}\label{eq:wrong intuition}
\partial_t \hat \rho(t,\xi) \sim  |\xi|^2 \left( - \hat \rho(t,0) + \chi \hat \rho(t,0)^2\right) = |\xi|^2(-1+\chi)\, . \end{equation}
This argues in favor of blow-up at low modes although
misleadingly. We have plotted in Figure \ref{fig:contraction}
numerical simulation of \eqref{eq:1DKS Fourier} in the
supercritical case. Observe that blow-up arises for $|\xi|\gg1$,
on the contrary to the  misleading heuristics \eqref{eq:wrong
intuition}. The integro-differential equation \eqref{eq:1DKS
Fourier} makes perfect sense even in the supercritical regime
$\chi>1$ after the first blow-up event. However the outcoming
fonction $\hat \rho(t,\xi)$ is no longer the Fourier transform of
a probability measure. In fact the blow-up time coincides with the
formation of the first dirac mass, namely when the frequency
distribution $\widehat{\rho}(t,\xi)$ is flat at infinity. This
contradictory intuition is similar to the proof of blow-up based
on the virial identity: the second momentum provides information
at infinity but is used to prove blow-up which is a local
behaviour.
\end{remark}

Recall the definition of Fourier distances \cite{CT} as they have
been introduced for the analysis of the Boltzmann equation.
\begin{definition}[Fourier distances]
Let $\rho_1$, $\rho_2$ being two probability
measures having the same center of mass. The
$d_{1}-$distance is defined as follows:
\begin{equation}\label{def:d1}
d_{1}\left(\rho_1,\rho_2\right)  = \sup_{\xi\neq 0} \left\{
|\xi|^{-1}\left|\hat \rho_1(\xi) -
\hat\rho_2(\xi)\right|\right\} \, .
\end{equation}
\end{definition}

\begin{proof}[Proof of Theorem \ref{thm:contraction Fourier}.]
First notice that supremum in \eqref{def:d1} is attained in
$\RR\setminus\{0\}$. Clearly we have $\left|\hat \rho_1(\xi) -
\hat\rho_2(\xi)\right|\leq 2$ and $$ \hat \rho_1(\xi) -
\hat\rho_2(\xi) \sim \left(\int_\RR |x|^2[\rho_1(x)-\rho_2(x)]\,
dx \right)|\xi|^2/2 \mbox{ as } \xi\to 0.
$$

We denote $F(t) = d_1(\rho_1(t),\rho_2(t))$ and $h(t,\xi) = |\xi|^{-1}(\hat \rho_1(t,\xi) -
\hat\rho_2(t,\xi))$.
We multiply the difference between the two equations
\eqref{eq:1DKS Fourier} by $\sign(h(t,\xi))$,
\begin{equation*}
\partial_t  \left|h(t,\xi)\right|  =
|\xi|^2 \left( -  \left|h(t,\xi)\right|   + \chi \sign\left(\hat \rho_1(t,\xi) -
\hat\rho_2(t,\xi)\right)   A(t,\xi) \right)\, , 
\end{equation*}
where
\[ A(t,\xi) = |\xi|^{-1}\int_{0}^1 \hat\rho_1(t,\sigma\xi) \hat \rho_1(t,(1-\sigma)\xi)\, d\sigma - |\xi|^{-1}\int_{0}^1 \hat\rho_2(t,\sigma\xi) \hat \rho_2(t,(1-\sigma)\xi)\, d\sigma \, . \]
The self-attraction contributions are handled as follows \cite[Th.
6.3]{CT}:
\begin{align*}
\left| A(t,\xi)\right|   \leq & \,|\xi|^{-1} \int_{0}^1 \left|
\hat\rho_1(t,\sigma\xi) - \hat\rho_2(t,\sigma\xi)
\right||\hat\rho_1(t,(1-\sigma)\xi)|\,
d\sigma \\
& + |\xi|^{-1}\int_{0}^1 \left| \hat\rho_1(t,(1-\sigma)\xi) - \hat\rho_2(t,(1-\sigma)\xi)\right||\hat\rho_2(t,\sigma\xi)|\, d\sigma \\
\leq & \, d_{1 }\left(\rho_1(t),\rho_2(t)\right)
\int_{0}^1\left(\sigma  + (1-\sigma)\right) \, d\sigma = F(t)\, .
\end{align*}
We obtain finally
\[
\partial_t  \left|h(t,\xi)\right|  \leq
|\xi|^2 \left( - \left|h(t,\xi)\right| + \chi F(t) \right)\, .
\]
We deduce
\begin{align*}
|h(t+\epsilon,\xi)| \leq & \,e^{-\epsilon|\xi|^2} |h(t,\xi)| + \chi \left( 1 - e^{-\epsilon|\xi|^2} \right) \sup_{s\in (0,\epsilon)} F(t+s)\, ,  \nonumber \\
|h(t+\epsilon,\xi)| - F(t) \leq& \,\left( 1 - e^{-\epsilon|\xi|^2} \right) \left( - F(t) + \chi \sup_{s\in (0,\epsilon)} F(t+s) \right)\, ,  \nonumber \\
\limsup_{\epsilon\to0^+} \dfrac{F(t+\epsilon) - F(t)}{\epsilon}
\leq& \,(\chi - 1)
\left(\liminf_{\epsilon\to0^+}|\xi^*(t+\epsilon)|^2\right) F(t)\,
, \label{eq:Fourier contraction}
\end{align*}
where $|\xi^*(t)|$ denotes the lowest frequency moduli for which
the supremum is attained in $F(t) = \sup |h(t,\xi)|$. We have used
the continuity of $F$ to pass to the limit. Therefore we get a
contraction estimate as soon as $\chi<1$. There is no explicit
rate since we do not know how to control $|\xi^*(t)|$ from below.

We also obtain a uniform strict
contractivity in self-similar variables. The Keller-Segel equation \eqref{eq:KSres} writes
as follows in Fourier variables:
\begin{equation*}\label{eq:1DKS Fourier resc}
\partial_t \hat \rho(t,\xi) = |\xi|^2 \left( - \hat \rho(t,\xi) + \chi \int_{0}^1 \hat\rho(t,\sigma\xi) \hat \rho(t,(1-\sigma)\xi)\, d\sigma\right) - \xi \partial_\xi \hat \rho(t,\xi) \, .
\end{equation*}
We proceed as above to get:
\begin{align*}
\partial_t \left|h(t,\xi)\right|   = &
|\xi|^2 \left( - \left|h(t,\xi)\right|  +
\chi \sign\left(\hat \rho_1(t,\xi) - \hat\rho_2(t,\xi)\right)   A(t,\xi) \right) \\
& - \xi \partial_\xi \left(  \left|h(t,\xi)\right| \right) -   \left|h(t,\xi)\right|  \, .
\end{align*}
We integrate along characteristics and argue as previously,
\begin{align*}
|h(t+\epsilon,\xi)| - F(t) \leq & F(t)\left( \exp\left( - \epsilon +  \dfrac{e^{-2\epsilon} - 1}2  |\xi|^2\right) - 1 \right)\\
& + \chi \left( \int_0^\epsilon  |e^{s-\epsilon}\xi|^2  \exp\left( s-\epsilon +  \dfrac{e^{2(s-\epsilon)} - 1}2|\xi|^2\right)\, ds\right)\sup_{s\in (0,\epsilon)} F(t+s)
\end{align*}
We deduce the following contraction estimate,
\begin{equation*} \label{eq:contraction self-similar}
\limsup_{\epsilon\to 0^+} \dfrac{F(t+\epsilon) - F(t)}{\epsilon} \leq -F(t) +  (\chi - 1)\left(\liminf_{\epsilon\to0^+}|\xi^*(t+\epsilon)|^2\right)  F(t)\, .
\end{equation*}
Hence the one-dimensional Keller-Segel equation \eqref{eq:1DKS Fourier} is a contraction with rate 1 with respect to the Fourier distance $d_1$.
\end{proof}


\subsection*{Acknowledgments}
Both authors are supported by the bilateral France-Spain project
FR2009-0019. JAC is partially supported by the projects
MTM2008-06349-C03-03 DGI-MCI (Spain) and 2009-SGR-345 from
AGAUR-Generalitat de Catalunya. Both authors thank CRM (Centre de
Recerca Matem\'atica) in Barcelona where part of this work was
done during the stay of the first author as a part of the thematic
program in Mathematical Biology in 2009.



\end{document}